\documentclass[reqno]{amsart}
\usepackage[all]{xy}
\usepackage{amsmath}
\usepackage{amsfonts}
\usepackage{amssymb}
\usepackage{amsthm}
\usepackage{amscd}
\usepackage{latexsym}
\usepackage{txfonts}
\usepackage{tikz}
\usepackage{enumitem}
\usepackage{graphicx}
\usepackage[all]{xy}
\usepackage{fullpage}
\numberwithin{equation}{section}
\newcommand{\bb}{\mathbb}
\newcommand{\ov}{\overline}
\theoremstyle{plain}
\newtheorem{theorem}{Theorem}
\newtheorem{lemma}[theorem]{Lemma}
\newtheorem{corollary}[theorem]{Corollary}

\newtheorem*{theorem*}{Theorem}
\theoremstyle{remark} \newtheorem*{remark}{Remark}
\theoremstyle{remark} \newtheorem{example}{Example}
\DeclareMathOperator{\Ima}{im}
\newcommand{\mrm}{\mathrm}

\newcommand{\K}{K^{\times}}
\raggedbottom

\makeatletter
\def\paragraph{\@startsection{paragraph}{4}%
  \z@\z@{-\fontdimen2\font}%
  {\normalfont\bfseries}}
\makeatother

\begin{document}
\title{Critical groups of van Lint-Schrijver Cyclotomic Strongly Regular Graphs.}
\date{}
\author{Venkata Raghu Tej Pantangi}
\email{pvrt1990@ufl.edu}
\address{Department of Mathematics, University of Florida, Gainesville, FL 32611-8105, USA.}
\keywords{invariant factors, elementary divisors, Smith normal form, critical group, sandpile group, adjacency matrix, Laplacian, Cyclotomy.
}
\begin{abstract}
The \emph{critical} group of a finite connected graph is an abelian group defined by the Smith normal form of its Laplacian. 
Let $q$ be a power of a prime and $H$ be a multiplicative subgroup of $K=\bb{F}_{q}$. By $\mrm{Cay}(K,H)$ we denote the Cayley graph on the additive group of $K$ with ``connection'' set $H$. A strongly regular graph of the form $\mrm{Cay}(K,H)$ is called a \emph{cyclotomic strongly regular graph}. Let $\ell >2$ and $p$ be primes such that $p$ is primitive $\pmod{\ell}$. 
We compute the \emph{critical} groups of a family of \emph{cyclotomic strongly regular graphs} for which $q=p^{(\ell-1)t}$ (with $t\in \bb{N}$) and $H$ is the unique multiplicative subgroup of order $k=\frac{q-1}{\ell}$. These graphs were first discovered by van Lint and Schrijver in \cite{VS}.
\end{abstract}

\maketitle

\section{Introduction}
Let $\Gamma=(V,E)$ be a finite, simple, and connected graph. Let $A$ be the adjacency matrix of $\Gamma$ with respect to some arbitrary but fixed ordering of the vertex set $V$. Define the matrix $D$ to be the diagonal matrix of size $|V|$ whose $i$th diagonal entry is the valency of the $i$th vertex of $\Gamma$. The matrix $L:=D-A$ is called the Laplacian matrix of $\Gamma$. By $\bb{Z}^V$ we denote the free $\bb{Z}$-module with $V$ as a basis set. By abuse of notation, we may consider $L$ to be an element of $\mathrm{End}_{\bb{Z}}\left(\bb{Z}^V\right)$. The \emph{critical} group $C(\Gamma)$ is the finite part of the cokernal of $L$. 

This group is an invariants of $\Gamma$. By Kirchhoff's Matrix-tree theorem, it can be deduced that the order of $C(\Gamma)$ is equal to the number of spanning trees of $\Gamma$. (For instance, see \cite{sta}.) The critical groups of various graphs arise in graph theory in the context of the chip firing game (cf. \cite{Biggs}), as the \emph{abelian sandpile group} in statistical physics (cf. \cite{Dhar}), and also in arithmetic geometry (cf. \cite{Lor0}). Early works with computations of critical groups include \cite{Vince} and \cite{Dlor}. In \cite{Vince}, the critical groups of  Wheel graphs and complete bipartite graphs were computed. In the same paper, it was shown that the group depends only on the cycle matroid of the graph. The critical groups of complete bipartite graphs were computed independently in \cite{Dlor} as well.  

Other papers that include computation of critical groups of families of graphs include  \cite{Lor1}, \cite{CSX}, \cite{Bai}, \cite{JNR}, \cite{DS}, \cite{SDB}, and \cite{PS}. In \cite{Lor1}, Lorenzini examined the proportion of graphs with cyclic critical groups among graphs with critical groups of particular order. There are relatively few classes of graphs with known critical groups. A particular class of groups that has proved amenable to computations is the class of strongly regular graphs (for instance, see section 3 of\cite{Lor1}). In this paper we describe the critical groups of the \emph{cyclotomic strongly regular graphs} discovered in \cite{VS}. 

Consider a finite field $K$ of characteristic $p$ and a subgroup $H$ of $\K$. By $\mrm{Cay}(K,H)$ we denote the Cayley graph on the additive group of $K$ with ``connection'' set $H$. If $\mathrm{Cay}(K,H)$ is a strongly regular graph, then we speak of a \emph{cyclotomic strongly regular graph} (\emph{cyclotomic SRG}). The Paley graph is a well known example of a \emph{cyclotomic SRG}. This family of SRGs has been studied extensively by many authors; see \cite{VS, BWX, SW, FMX}. We refer the reader to section $4$ of \cite{QX} for a survey on these graphs. If $H$ is the multiplicative group of a non-trivial subfield of $K$, then $\mathrm{Cay}(K,H)$ is a \emph{cyclotomic SRG}. A graph of this form is called a \emph{subfield cyclotomic SRG}. Other examples of \emph{cyclotomic SRGs} are the \emph{semi-primitive cyclotomic SRGs}.
Consider a subgroup $H$ of $\K$ with $N:=[\K:H]>1$ and $N \mid \frac{|\K|}{p-1}$. 
Further assume that there exists an integer $s$ such that $p^{s} \equiv -1 \pmod{N}$. These arithmetic restrictions on $H$ ensure that the adjacency matrix of the regular graph $\mathrm{Cay}(K,H)$ has exactly $3$ eigenvalues and thus is a \emph{cyclotomic SRG} (see for example, Section 4 of \cite{QX}). A graph of this form is called a \emph{semi-primitive cyclotomic SRG}. According to a conjecture by Schmidt and White (Conjecture $4.4$ of \cite{SW}/ Conjecture $4.1$ of \cite{QX}), other than the above mentioned classes, there are only $11$ sporadic examples of \emph{cyclotomic SRGs}. In this paper we consider a class of \emph{semi-primitve cyclotomic SRGs} discovered in \cite{VS}.                        
  
Consider a pair of primes $(p,\ell)$ with $\ell \neq 2$, and a positive integer $t\in \bb{N}$. The graph $G(p,\ell,t)$ denotes $\mrm{Cay}(K,S)$, where $K=\bb{F}_{p^{(\ell-1)t}}$ and $S$ is the subgroup of index $\ell$ in $\K$ . Further assume (a) $p^{(\ell-1)t/2} \neq \ell-1$ whenever $t$ is odd; and (b) $p$ is primitive in $\bb{Z}/ \ell \bb{Z}$. The arithmetic constraint (a) is equivalent to the graph being connected, and (b) implies that $G(p,\ell,t)$ is a \emph{semi-primitive SRG}. These \emph{semi-primitive cylclotomic SRGs} were discovered in \cite{VS}. In this paper we describe the critical groups of this family of graphs. The construction of this family is similar to that of Paley and Peisert graphs. The critical group of the Paley graph was computed in \cite{CSX}, and that of Peisert graph was described in \cite{S}. We extend the techniques used in \cite{CSX} and \cite{S} to compute the critical group of $G(p,\ell,t)$ (with $(p,\ell,t)$ satisfying arithmetic constraints (a) and (b)).

We denote the critical group of $G(p,\ell,t)$ by $C$. Our results giving the elementary divisors of $C$ are stated in \S\ref{mainresult}. 
As $C$ is an abelian group, we can find subgroups $C_{p}$ (the Sylow $p$-subgroup of $C$),  $C_{p'}$ such that $C \cong C_{p}\oplus C_{p'}$ and $p \nmid |C_{p'}|$. We use different approaches to compute these two subgroups. Theorem \ref{pprime} describes $C_{p'}$. We apply a standard method of diagonalizing the Laplacian using the character table of $K=\bb{F}_{q}$ (here $q=p^{(\ell-1)t}$). A different approach is required to obtain a description of $C_{p}$. In \S\ref{block}, we study the permutation action of $S$ on $R$-free module $R^K$ with basis $K$, where $R$ is the ring of integers of a suitable extension of $\bb{Q}_{p}$. Let $\hat{S}$ be the group of $R$-valued characters of $S$. We obtain the decomposition $R^K=\bigoplus_{\chi \in \hat{S}}N_{\chi}$, where $N_{\chi}$ is the isotypic component of the $S$-module $R^K$ corresponding to the character $\chi$. Since $S$ preserves adjacency, each of these isotypic components is invariant under the Laplacian $L$. Some Jacobi sums naturally arise in the computation of the Smith normal form of $L$ restricted to these isotypic components. The description of $C_{p}$ is reduced to computation of $p$-adic valuations of Jacobi sums. Classical results by Stickelberger and Gauss describe the $p$-adic valuations of Jacobi sums in combinatorial terms. Theorem \ref{m} gives a description of $C_{p}$ in terms of $p$-adic valuations of Jacobi sums. At this point, writing the elementary divisor form of $C$ is now reduced to a counting problem. We use the transfer matrix method to determine the elementary divisor form in the case $\ell=3$.
 For a fixed $t$, Theorem \ref{e3} leads to a recursive algorithm that yields the $p$-elementary divisors of the critical group of $G(p, 3,t)$, where $p$ is any prime with $p \equiv 2\pmod{3}$. As a consequence we were able to show that the $p$-rank of the Laplacian of $G(p,3,t)$ is $\left(\dfrac{p+1}{3}\right)^{2t}(2^{t+1}-2)$ (see Cor. \ref{rank}). We were not able to obtain a similar results in the case $\ell \neq 3$.   

\section{Definitions and Notation.}\label{nota}
Let $(p,\ell)$ be a pair of primes such that $\ell >2$ and $p$ is primitive $\pmod{\ell}$. Let $t \in \bb{Z}_{>0}$ and $q=p^{(\ell-1)t}$. Moreover we assume that $\sqrt{q}=p^{(\ell-1)t/2} \neq \ell-1$ whenever $t$ is odd.
Consider the field $K=\bb{F}_{q}$ and the unique subgroup $S$ of $\K$ of order $k:= (q-1)/\ell$. Then by $G(p,\ell,t)$ we denote the graph with vertex set $K$ and edge set $ \{\{x,y\}\ |\ x,y \in K\ \text{and}\ x-y \in S\}$. This is the undirected Cayley graph associated with $(K,S)$. By $A$ we denote the adjacency matrix of $G(p,\ell,t)$ with respect to some fixed but arbitrary ordering of the vertex set $K$. The Laplacian matrix $L$ of $G(p, \ell,t)$ is the matrix $kI-A$. 

Given an Integral domain $\mathcal{R}$, by $\mathcal{R}^K$ we denote the $\mathcal{R}$-free module with $\{[x]| x \in K\}$ as a basis. Let $\mu_{A}$, $\mu_{L}$ be endmorphisms of $\mathcal{R}^K$ defined by 
$\mu_{A}([x]):=\sum\limits_{s \in S}[x+s]$ and $\mu_{L}([x]):=k[x]-\sum\limits_{s \in S}[x+s]$ respectively. The matrix representation of $\mu_{L}$ (respectively $\mu_{A}$) with respect to the basis set $\{[x]| x \in K\}$ is the Laplacian matrix $L$ (respectively $A$). 
The \emph{critical} group $C$ of $G(p,\ell ,t)$ is the finite part of the cokernal of $\mu_L:\bb{Z}^K \to \bb{Z}^K$. Let $C_{p}$ be the Sylow $p$-subgroup of $C$. Let $C_{p'}$ be the largest subgroup of $C$ whose order is not divisible by $p$. As $C$ is abelian, we have $C=C_{p}\oplus C_{p'}$.

\section{Some properties of $G(p,\ell,t)$.}\label{lin}  
In section 2 of \cite{VS}, the authors show that $G(p,\ell,t)$ is a strongly regular graph.
\begin{theorem*}[van Lint-Schrijver]
The graph $G(p,\ell,t)$ is a strongly regular graph with parameters
$$\left(q,\ \frac{q-1}{\ell}, \ \dfrac{q-3\ell+1 + (-1)^{t+1}(\ell-1)(\ell-2)\sqrt{q}}{\ell^{2}},\ \dfrac{q-\ell+1+(-1)^{t}(\ell-2)\sqrt{q}}{\ell ^{2}} \right),$$ where $q= p^{(\ell-1)t}$. The eigenvalues of the adjacency matrix $A$ of $G(p, \ell ,t)$ are $k=\dfrac{q-1}{\ell}$, $r_{\chi_{1}}$, $r_{\chi_{\alpha}}$, with multiplicities $1$, $k$, and $q-k-1$ respectively. Here $r_{\chi_{\alpha}}= \dfrac{-1+(-1)^{t}\sqrt{q}}{\ell}$ and $r_{\chi_{1}}=r_{\chi_{\alpha}}+(-1)^{t+1} \sqrt{q}$.  
\end{theorem*}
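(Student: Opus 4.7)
My plan is to diagonalize the adjacency matrix $A$ of $G(p,\ell,t)=\mrm{Cay}(K,S)$ via the additive characters of $(K,+)$, and then to identify the non-trivial eigenvalues with semi-primitive Gauss periods of order $\ell$. Since $A$ acts on $\bb{C}[K]$ as convolution by $\mathbf{1}_{S}$, its eigenvalues are the character sums $\eta_\psi=\sum_{s\in S}\psi(s)$ as $\psi$ ranges over the additive characters of $K$. The trivial character gives the valency $k$ with multiplicity one. For a non-trivial $\psi$, I fix a reference non-trivial character $\psi_1$ and write $\psi(x)=\psi_1(ax)$ for a unique $a\in K^\times$; then $\eta_\psi=\sum_{s\in S}\psi_1(as)$ depends only on the coset $aS\in K^\times/S$. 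Since $[K^\times:S]=\ell$ and each coset carries exactly $k=|S|$ non-trivial characters, at most $\ell$ distinct non-trivial eigenvalues appear, each with multiplicity a multiple of $k$.

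Second, I would expand the indicator of $S$ via the multiplicative characters of $K^\times$ trivial on $S$, namely $\mathbf{1}_S(x)=\tfrac{1}{\ell}\sum_{\chi^\ell=1}\chi(x)$, to obtain
$$\eta_{\psi_a}=\tfrac{1}{\ell}\Bigl(-1+\sum_{\chi\ne 1,\,\chi^\ell=1}\overline{\chi}(a)\,g(\chi,\psi_1)\Bigr),$$
where $g(\chi,\psi_1)=\sum_{x\in K^\times}\chi(x)\psi_1(x)$ is the standard Gauss sum. Because $p$ is primitive modulo $\ell$, we have $p^{(\ell-1)/2}\equiv -1\pmod{\ell}$, so $\chi^{-1}$ lies in the Galois orbit of $\chi$ and each $g(\chi,\psi_1)$ with $\chi$ of order $\ell$ is a rational integer of absolute value $\sqrt{q}$. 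The classical semi-primitive evaluation of Gauss sums, obtained for instance by lifting the base case over $\bb{F}_{p^{\ell-1}}$ through the Davenport--Hasse relation, then yields the uniform value $g(\chi,\psi_1)=(-1)^{t+1}\sqrt{q}$.

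Substituting and applying the orthogonality relation
$$\sum_{\chi\ne 1,\,\chi^\ell=1}\overline{\chi}(a)=\begin{cases}\ell-1&\text{if }a\in S,\\-1&\text{if }a\notin S,\end{cases}$$
then produces exactly two non-trivial eigenvalues: $r_{\chi_1}=\frac{-1+(-1)^{t+1}(\ell-1)\sqrt{q}}{\ell}$ from the single coset $S$ with multiplicity $k$, and $r_{\chi_\alpha}=\frac{-1+(-1)^t\sqrt{q}}{\ell}$ from the remaining $\ell-1$ cosets with multiplicity $(\ell-1)k=q-k-1$. A regular graph with exactly three eigenvalues is strongly regular, and the stated values of $\lambda$ and $\mu$ follow from the identities $\lambda-\mu=r_{\chi_1}+r_{\chi_\alpha}$ and $\mu-k=r_{\chi_1}r_{\chi_\alpha}$ by direct algebra; the arithmetic constraint $\sqrt{q}\ne\ell-1$ for odd $t$ is exactly what ensures $\mu\ne 0$ so that the graph is connected.

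The main obstacle is pinning down the sign in the Gauss sum evaluation. Rationality and the absolute value $\sqrt{q}$ come cheaply from semi-primitivity, but showing that the sign is $(-1)^{t+1}$ uniformly across all $\ell-1$ non-trivial characters of order $\ell$ requires either an explicit computation at $t=1$ propagated through the Davenport--Hasse lift, or a direct appeal to the Baumert--Mykkeltveit / Berndt--Evans--Williams closed form for semi-primitive Gauss sums.
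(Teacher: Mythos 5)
Your proof is correct in outline, but it is a genuinely different route from the one the paper sketches. The paper follows van Lint--Schrijver: strong regularity comes from a rank-$3$ permutation group, namely the maps $x\mapsto ax^{p^{n}}+b$ with $a\in S$, where primitivity of $p$ modulo $\ell$ makes the Frobenius fuse the $\ell-1$ nontrivial cosets of $S$ into a single orbital; the additive-character computation (which you share) is then used only to see that there are at most three eigenvalues $k$, $r_{\chi_{1}}$, $r_{\chi_{\alpha}}$ with multiplicities $1$, $k$, $q-k-1$, and the explicit eigenvalues and parameters are read off from the general theory of strongly regular graphs --- no Gauss-sum evaluation and no sign determination are needed. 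You instead evaluate the eigenvalues directly as Gauss periods, $\eta_{\psi_{a}}=\tfrac{1}{\ell}\bigl(-1+\sum_{\chi\neq 1}\overline{\chi}(a)\,g(\chi,\psi_{1})\bigr)$, and import the semi-primitive evaluation $g(\chi,\psi_{1})=(-1)^{t+1}\sqrt{q}$. That appeal does succeed, and the ``obstacle'' you flag is smaller than you suggest: uniformity over the $\ell-1$ characters is automatic, since all nontrivial $\chi$ with $\chi^{\ell}=1$ have order $\ell$ and form one orbit under $\chi\mapsto\chi^{p}$ while $g(\chi^{p})=g(\chi)$; and the Baumert--McEliece/Berndt--Evans--Williams closed form gives $(-1)^{t-1}\sqrt{q}$ for $p=2$ and $(-1)^{t-1+t(p^{(\ell-1)/2}+1)/\ell}\sqrt{q}$ for odd $p$, where $(p^{(\ell-1)/2}+1)/\ell$ is even (even numerator, odd denominator), so the sign is $(-1)^{t+1}$ in all cases, exactly as you claim. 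Your identities $\lambda-\mu=r_{\chi_{1}}+r_{\chi_{\alpha}}$ and $\mu-k=r_{\chi_{1}}r_{\chi_{\alpha}}$ then reproduce the stated parameters. One small repair: to invoke ``regular with exactly three eigenvalues $\Rightarrow$ strongly regular'' you need connectedness first, and the clean way to get it is to note that $r_{\chi_{\alpha}}=k$ is impossible and $r_{\chi_{1}}=k$ forces $t$ odd and $\sqrt{q}=\ell-1$ (the excluded case), so $k$ has multiplicity one; arguing through $\mu\neq 0$ is circular as written. As for what each approach buys: the paper's route is light on number theory but leans on rank-$3$ group theory, whereas yours costs the semi-primitive Gauss-sum evaluation but delivers the eigenvalues with explicit signs directly and runs on the same Gauss/Jacobi/Stickelberger machinery the paper later uses for the Sylow $p$-part, so it fits the rest of the paper naturally.
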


We now give a brief sketch of the proof of the above given in \S2 of \cite{VS}. We recall from \S\ref{nota} that $K=\bb{F}_{q}$ and $S$ is the unique subgroup of $\K$ of size $k$. Given $a \in S$, $b\in K$, $n \in \bb{Z}$, define $T_{(a,b,n)}:K \to K$ by $T_{(a,b,n)}(x):=ax^{p^{n}}+b$. Let $G$ be the group of transformations $T_{(a,b,n)}$. The action of $G$ on $K$ is shown to be a permutation action of rank $3$. It is also shown that the orbits of the natural action of $G$ on $K \times K$ are $\{(x,x)|\ x \in K\}$, $\Omega:=\{(x,y)|\ x,y \in K\ \text{and} \ x-y \in S\}$ and $\Delta:=\{(x,y)|\ x,y \in K\ \text{and}\ x-y \notin S\cup\{0\}\}$. The graph $G(p,\ell,t)$ is the graph with vertex set $K$ and edge set $\Omega$. Standard results on rank $3$ permutation groups of even order show that $G(p,\ell,t)$ is a strongly regular graph. 

Following notation in \S\ref{nota}, we have $\mu_{A}([x])=\sum\limits_{s \in S}[x+s]$. Let $\hat{K}$ be the group of complex valued characters of $K$. Given an additive character $\chi \in \hat{K}$, consider $[\chi]:=\sum\limits_{y \in K}\chi(y)[y]$ and $r_{\chi}=\sum\limits_{s \in S}\chi(s)$. We have $\mu_{A}([\chi])=r_{\chi}[\chi]$. By orthogonality of characters, we may observe that $\{[\chi]| \ \chi \in \hat{K}\}$ is a basis of $\bb{C}^K$ and thus that all eigenvalues of $\mu_{A}$ are of the form $r_{\chi}$ for some additive character $\chi$. Consider the character $\chi_{1}$ defined by $\chi_{1}(x):= e^{\frac{2\pi i Tr(x)}{p}}$. It is a well-known result that every character $\chi$ is of the form $\chi=\chi_{a}$, where $\chi_{a}(x)=\chi_{1}(ax)$, for $a\in K$. 
Let $\alpha$ be a generator of $\K$. We observe that for all $s \in S $, we have $r_{\chi_{s}}=r_{\chi_{1}}$; and for all $b \notin S \cup \{0\}$, we have $r_{\chi_{b}}=r_{\chi_{\alpha}}$. Thus the adjacency matrix has (at most) three eigenvalues $k=r_{\chi_{0}}$, $r_{\chi_{1}}$, $r_{\chi_{\alpha}}$, with geometric multiplicities $1$, $|S|=k$, and $q-k-1=|S\cup \{0\}|$ respectively. The parameters given in the Theorem above can now be deduced from the general theory of strongly regular graphs.

Now the eigenvalues of the Laplacian $L=kI-A$ are $0$, $u:=k-r_{\chi_{1}}$ and $v:=k-r_{\chi_{\alpha}}$, with multiplicities $0$, $k$, and $q-k-1$ respectively. We can see that $v= \sqrt{q} \dfrac{\sqrt{q}+(-1)^{t+1}}{\ell}$ and $u= v+(-1)^{t}\sqrt{q}$. It is well known that the nullity of the Laplacian matrix of a graph is equal to the number of connected components. Clearly $v \neq 0$, and thus $G(p,\ell,t)$ is connected if and only if either $t$ is even, or $t$ is odd and $\sqrt{q}\neq \ell-1$. We will assume throughout that $p^{(\ell-1)t/2} \neq \ell-1$ whenever $t$ is odd. 

Given an element $a$ in an unramified extension of of $\bb{Q}_{p}$, the $p$-adic valuation of $a$ is denoted by $v_{p}(a)$.
Let $v_{p}(\ell-1)=d$, then $v_{p}(u)=\frac{1}{2}(\ell-1)t+d$ and $v_{p}(v)=\frac{1}{2}(\ell-1)t$.

By Theorem $8.1.2$ of \cite{BH}, we have 
\begin{equation}\label{sreq}
L(L-(v+u)I)=vuI+\mu J,
\end{equation}
where $\mu=\dfrac{q-\ell+1+(-1)^{t}(\ell-2)\sqrt{q}}{\ell ^{2}}$.
Observing that $LJ=0$, we see that the minimal polynomial of $L$ is $(x)(x-u)(x-v)=0$. Therefore $L$ is diagonalizable. 
As a consequence of Kirchhoff's Matrix-Tree Theorem (cf. \cite{sta}), the order of critical group of $G(p,\ell,t)$ is $\dfrac{u^{k}v^{q-k-1}}{q}$.
\section{Main Results}\label{mainresult}
Let $(p,\ell)$ be a pair of primes with $\ell>2$ and $p$ primitive modulo $\ell$. Given $t \in \bb{N}$, let $q=p^{(\ell-1)t}$ and $k=\dfrac{q-1}{\ell}$. Let $C$ denote the critical group of $G(p ,\ell, t)$. Let $C_{p}$ be the Sylow $p$-subgroup of $C$. Let $C_{p'}$ be the largest subgroup of $C$ whose order is not divisible by $p$. As $C$ is abelian, we have $C=C_{p}\oplus C_{p'}$. 

 The following theorem describes the Sylow $p$-subgroup $C_{p}$ of the critical group $C$ of $G(p ,\ell, t)$.

\begin{theorem}\label{m}
Consider the graph $G(p,\ell,t)$ with $\sqrt{q}=p^{(\ell-1)t/2} \neq \ell-1$ whenever $t$ is odd. Let $d$ denote $v_{p}(\ell-1)$.
Given integers $a,b$ not divisible by $q-1$, let $c(a,b)$ denote the number of carries when adding the $p$-adic expansions of $a$ and $b$ $\pmod{q-1}$. Let $L$ be the Laplacian matrix and let $C$ be the critical group of $G(p,\ell,t)$.  
For $1\leq i \leq k-1$, let

 $$\mathfrak{min}(i)=\mathrm{min}\left(\{c(i+mk,nk)| 0 \leq m \leq \ell-1\ \text{and} \ 0 < n \leq \ell-1\}\right).$$ Given a non-zero positive integer $j$, let $e_{j}$ be the multiplicity of $p^{j}$ as a $p$-elementary divisor of $C$. By $e_{0}$ we denote the p-rank of the Laplacian $L$ of $G(p,\ell,t)$. Then the following are true.
\begin{enumerate}
\item $e_{0}=|\{i\ |\ 1\leq  i \leq k-1\ \text{and}\ \mathfrak{min}(i)=0 \}|+2$ and $e_{(\ell-1)t+d}=|\{i\ |\ \mathfrak{min}(i)=0 \}|$.

\item $e_{j}=|\{i\ | \  1\leq  i \leq k-1\ \text{and}\ \mathfrak{min}(i)=j \}|$ for $0<j <\frac{(\ell-1)t}{2}$.  
\item $e_{j}=e_{(\ell-1)t+d-j}$ for $0<j <\frac{(\ell-1)t}{2}$.
\item If $p\nmid \ell-1$, then $e_{\frac{(\ell-1)t}{2}}= q+1-2\sum\limits_{j<t} e_{j}$.
\item If $p\mid \ell-1$, then
\begin{enumerate}
\item $e_{\frac{(\ell-1)t}{2}+d}=k+2-\sum\limits_{j<t} e_{j}$ and
\item $e_{\frac{(\ell-1)t}{2}}= (\ell-1)k- \sum\limits_{j <t} e_{j}$.
 
\end{enumerate}   
\item  $e_{j}=0$ for all other $j$.
\end{enumerate}
\end{theorem}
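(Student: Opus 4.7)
The plan is to localize at $p$ and work over $R$, the ring of integers of the unramified extension of $\bb{Q}_{p}$ containing the $(q-1)$-st roots of unity. Over $R$ the group $S$ acts diagonalizably on the permutation module $R^{K}$, so we decompose $R^{K} = N_{1} \oplus \bigoplus_{\chi \neq 1} N_{\chi}$ into $S$-isotypic components. The $S$-orbits on $K$ consist of the fixed point $\{0\}$ together with the $\ell$ cosets of $S$ in $\K$, so $N_{1}$ has $R$-rank $\ell+1$ and each $N_{\chi}$ with $\chi \neq 1$ has rank $\ell$. Because $\mu_{L}$ commutes with the $S$-action, each $N_{\chi}$ is $\mu_{L}$-stable, and the $p$-adic Smith normal form of $L$ is the direct sum of those of the blocks $L|_{N_{\chi}}$.

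For the trivial component $N_{1}$, I would use the orbit-sum basis $\{[0]\} \cup \{\sum_{x \in S\alpha^{j}}[x] : 0 \leq j \leq \ell-1\}$ to present $L|_{N_{1}}$ as an explicit $(\ell+1) \times (\ell+1)$ integer matrix whose entries are controlled by the association-scheme structure constants of the graph. Computing its Smith normal form directly, and cross-checking against the total $p$-valuation of $|C| = u^{k}v^{q-k-1}/q$ coming from the Matrix--Tree theorem, I expect this block to contribute one zero invariant factor (the connectivity kernel), two non-zero invariant factors of $p$-valuation $0$, and $\ell-2$ middle factors whose valuations sum to $(\ell-2)(\ell-1)t/2 + d$. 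This produces the ``$+2$'' in part (1) and matches part (5a) in the ramified case $p \mid \ell-1$.

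For a non-trivial character $\chi$, write $\chi = \omega^{i}|_{S}$ with $1 \leq i \leq k-1$. In the basis $w_{\alpha^{j}} := \sum_{s \in S} \chi(s^{-1})[s\alpha^{j}]$ ($j = 0, 1, \ldots, \ell-1$) of $N_{\chi}$, a direct computation yields
\[
\mu_{A}(w_{\beta}) \;=\; \sum_{j=0}^{\ell-1} C_{\beta,j}\, w_{\alpha^{j}}, \qquad C_{\beta,j} \;=\; \sum_{\substack{t \in S\\ \beta+t \in S\alpha^{j}}} \chi\bigl((\beta+t)\alpha^{-j}\bigr).
\]
Expanding the coset indicator $\mathbf{1}_{S\alpha^{j}}$ in the order-$\ell$ group of multiplicative characters of $\K$ trivial on $S$ (generated by $\omega^{k}$), each $C_{\beta,j}$ unfolds into an $R$-linear combination of Jacobi sums $J(\omega^{i+mk}, \omega^{nk})$ with $0 \leq m \leq \ell-1$ and $0 < n \leq \ell-1$. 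Stickelberger's theorem then identifies $v_{p}(J(\omega^{i+mk}, \omega^{nk})) = c(i+mk, nk)$, bringing each entry of $L|_{N_{\chi}}$ under combinatorial control and tying its $p$-valuation to the carry function defining $\mathfrak{min}(i)$.

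The main obstacle, and the step I expect to require the most care, is extracting the Smith normal form of the $\ell \times \ell$ block $L|_{N_{\chi}}$ from this Jacobi-sum data. Three ingredients drive the calculation. First, $v_{p}(\det L|_{N_{\chi}}) = v_{p}(uv^{\ell-1}) = \ell(\ell-1)t/2 + d$, from the eigenvalues $u$ (once) and $v$ (with multiplicity $\ell-1$) of $L$ on $N_{\chi}$. Second, the smallest invariant factor has $p$-valuation equal to the minimum $p$-valuation of a $1 \times 1$ minor, and a careful analysis of the Jacobi-sum unfolding (ruling out unexpected cancellations) collapses this minimum to $\mathfrak{min}(i)$. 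Third, the all-ones matrix $J$ annihilates every non-trivial isotypic component, so restricting (\ref{sreq}) to $N_{\chi}$ yields $(L|_{N_{\chi}} - uI)(L|_{N_{\chi}} - vI) = 0$; hence $L|_{N_{\chi}} - vI$ has rank $1$ over $\mathrm{Frac}(R)$, so all its $2 \times 2$ minors vanish, forcing the $j$-th elementary symmetric sums of the invariant factors into sharp alignment with powers of $v$. A cofactor analysis then pins the invariant factor valuations as $\mathfrak{min}(i)$, $(\ell-1)t/2$ (repeated $\ell-2$ times), and $(\ell-1)t+d-\mathfrak{min}(i)$, with the ramified case $p \mid \ell-1$ requiring careful tracking of the denominator $\ell$. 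Aggregating over all $i$ and combining with the $N_{1}$ contribution reorganizes the data into parts (1)--(6) of the theorem.
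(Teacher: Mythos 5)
Your overall architecture is the same as the paper's: decompose $R^{K}$ into $S$-isotypic components over the unramified extension, note that $\mu_{L}$ preserves the blocks, convert the block entries to Jacobi sums, apply Stickelberger to read off valuations as carries, and budget the total valuation with the Matrix--Tree theorem. Your tactical route for the non-trivial blocks is genuinely different and viable: instead of the paper's counting lemma (Lemma \ref{main}) combined with the eigenvalue-multiplicity bound (Lemma \ref{eigenval}) and the palindromy coming from $L_{i}(L_{i}-(u+v)I)=uvI$ (Lemma \ref{palin}), you exploit that $L|_{N_{\chi}}-vI$ has rank one, expand every $j\times j$ minor of $(L|_{N_{\chi}}-vI)+vI$ so that only terms with at most one factor from the rank-one part survive, and read off $d_{j}=\mathfrak{min}(i)+(j-1)(\ell-1)t/2$, hence the invariant factors. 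Two caveats there. First, in your coset basis $w_{\alpha^{j}}$ each entry is a combination of several Jacobi sums, so ``the minimal $1\times 1$ minor has valuation $\mathfrak{min}(i)$'' is not automatic and your padded-minor upper bound needs an actual entry of valuation $\mathfrak{min}(i)$; the clean fix is to pass to the $\K$-eigenbasis $f_{i+mk}$ (the change of basis is the character table of $\bb{Z}/\ell\bb{Z}$, a unit matrix over $R$ since $\ell\in R^{\times}$), where the entries are single Jacobi sums and $q/\ell$, exactly as in Lemma \ref{im}. Second, the boundary case $\mathfrak{min}(i)=(\ell-1)t/2$ needs separate care: for principal minors the terms $v^{j}$ and $v^{j-1}\mathrm{tr}$ have equal valuation and can cancel, so you must use a non-principal minor through an off-diagonal Jacobi-sum entry (all of which have valuation exactly $(\ell-1)t/2$ in that case) or fall back on the multiplicity-one eigenvalue $u$ as the paper does.

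The genuine gap is your treatment of the trivial block $N_{1}$ (the paper's $L_{0}$), which is asserted rather than proved, and which parts (1), (4) and (5) of the theorem depend on in detail. Saying you will ``compute its Smith normal form directly'' and cross-check against $v_{p}(|C|)$ does not suffice: the Matrix--Tree budget only gives the \emph{sum} $\kappa(L_{0})=(\ell-2)(\ell-1)t/2+d$ of the middle valuations, and neither that sum nor the divisibility chain pins down that exactly $\ell-3$ of them equal $(\ell-1)t/2$ and one equals $v_{p}(u)=(\ell-1)t/2+d$ --- which is precisely what feeds the ``$+1$'' landing in $e_{(\ell-1)t/2+d}$ versus $e_{(\ell-1)t/2}$ in (5) and the count in (4). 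The paper closes this by (i) exhibiting a unit $2\times 2$ minor and bounding all $3\times 3$ minors to get exactly two unit invariant factors, (ii) using Lemma \ref{eigenval} with the eigenvalue $v$ of geometric multiplicity $\ell-1$ to force $\dim(\overline{M_{v_{p}(v)}(L_{0})})=\ell-1$, and (iii) showing via \eqref{sreq} and $LJ=0$ that $\overline{\Ima(L_{0})}$ (of dimension $2$) lies in $\overline{M_{v_{p}(u)}(L_{0})}$, so that at least one non-unit factor has valuation $\geq v_{p}(u)$; only then does the valuation budget force the stated distribution via Lemma \ref{main}. Your proposal contains no substitute for steps (ii)--(iii), so as written the $L_{0}$ block --- and with it the exact statements (1), (4), (5) --- is not established. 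The non-trivial blocks aside, you would need to supply an argument of this kind (or an honest explicit SNF computation of the cyclotomic-number matrix that encodes the same information) to complete the proof.
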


We prove the above Theorem in \S\ref{syl}.

In the case of $G(p,3,t)$, application of the transfer matrix method (cf. Section 4.7 of \cite{EC1}) leads us to a recursive algorithm that outputs closed form expressions for multiplicities of $p$-elementary divisors of $C$. As a consequence, we also determine a closed form expression for the $p$-rank (i.e $e_{0}$ in the context of the Theorem above) of the Laplacian. The following theorem gives a quick recursive algorithm to compute $p$-elementary divisors. The proof of the following result is in \S\ref{3}.

Let
$P=\left( \left(\frac{p+1}{3}\right)^{2}(x^{2}y^{2}+x^{2}y+xy^{2}+x+y+1)+\left(\frac{p-2}{3}\right)^{2}3xy\right)$, $R=p^{2}x^{3}y^{3}$ and
 
 $Q=\left( \left(\frac{p+1}{3}\right)^{2}(xy)(x^{2}y^{2}+x^{2}y+xy^{2}+x+y+1)+\left(\frac{2p-1}{3}\right)^{2}3x^{2}y^{2}\right)$.  We define the polynomial $C(2t) \in \bb{C}[x,y]$ recursively as follows:

\begin{align}\label{rec}
\begin{split}
C(2)=2P \\ C(4)= 2(P^{2}-2Q),\\
 C(6)= 6R+2(P^{3}-2QP)-2PQ, \\
\text{and}\ C(2t)= PC(2t-2)-QC(2t-4) + R C(2t-6) &\ \text{for}\ t> 3.
\end{split}
\end{align}

\begin{theorem}\label{e3}
Let $C_{p}$ be the Sylow $p$-subgroup of the critical group of the graph $G(p,3,t)$ (with $(p,t) \neq (2,1)$). Given a non-zero positive integer $j$, let $e_{j}$ be the multiplicity of $ p^{j}$ as a $p$-elementary divisor of $C$. By $e_{0}$ we denote the p-rank of the Laplacian $L$ of $G(p,3,t)$. Let $e_{(a,b)}$ be the coefficient of $x^{a}y^{b}$ in  $C(2t)$.
 Then the following are true. (Here $\delta_{ij}$ is the Kronecker delta function.)
\begin{enumerate}
\item $e_{0}=e_{2t+\delta_{2,p}}+2= \left(\frac{(p+1)}{3}\right)^{2t}(2^{t+1}-2)$.
\item For $a<t$, we have $e_{a}=e_{2t+\delta_{2,p}-a}=\sum\limits_{a<b\leq t}e_{(a,b)}$
\item $e_{t+\delta_{2,p}}=(k+2-\sum\limits_{j<t} e_{j})+(1-\delta_{2,p})(2k- \sum\limits_{j <t} e_{j})$.
\item $e_{t}=(1-\delta_{2,p})(k+2-\sum\limits_{j<t} e_{j})+ (2k- \sum\limits_{j <t} e_{j}) $. 
\item $e_{a}=0$ for all other $a$. 
\end{enumerate}
\end{theorem}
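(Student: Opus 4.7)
The plan is to reduce Theorem \ref{e3} to a transfer-matrix computation on the $p$-adic expansions of the indices $i$, building directly on Theorem \ref{m}.

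First I would specialize Theorem \ref{m} to $\ell=3$, for which $d=v_{p}(2)=\delta_{2,p}$. Parts (3)--(5) of Theorem \ref{e3} then follow immediately from parts (3)--(5) of Theorem \ref{m}, so the real content is an explicit computation of the counts $N_{j}:=|\{i\in [1,k-1]:\mathfrak{min}(i)=j\}|$, where
\[
\mathfrak{min}(i)=\min_{m\in\{0,1,2\},\ n\in\{1,2\}}c(i+mk,\,nk).
\]
Since $k$ and $2k$ are negatives of one another modulo $q-1$, the digit patterns are complementary ($(2k)_{j}=p-1-(k)_{j}$) and the roles of $n=1,2$ are symmetric. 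It is then natural to split the minimum as $\mathfrak{min}(i)=\min(\alpha(i),\beta(i))$ with $\alpha(i)=\min_{m}c(i+mk,k)$ and $\beta(i)=\min_{m}c(i+mk,2k)$, and to introduce the bivariate generating polynomial $C(2t)=\sum_{i}x^{\alpha(i)}y^{\beta(i)}$. The partitioned sums appearing in parts (1)--(2) of Theorem \ref{e3} are then coefficient sums of $C(2t)$, and the $x\leftrightarrow y$ symmetry that underlies $e_{a}=e_{2t+\delta_{2,p}-a}$ is inherited from the $k\leftrightarrow 2k$ symmetry of the digits.

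Second, I would construct a transfer matrix $T$. The key combinatorial fact is that $3k=p^{2t}-1$, so the base-$p$ digits of $k$ (and of $2k$) are periodic with period $2$ and are determined explicitly by $p\bmod 3$. Writing $i=\sum_{j=0}^{2t-1}i_{j}p^{j}$ and processing the digits in blocks of two (the natural block length, because $q=p^{2t}$ and the $k$-digit pattern has period $2$), one tracks, for each of the three additions $i+mk$ with $m\in\{0,1,2\}$, the outgoing carry together with the local contribution to the two carry counts being measured. The three values of $m$ are related by a cyclic shift of the digit pattern of $k$, so after identifying these orbits the transfer matrix $T$ can be taken of size $3\times 3$ with entries in $\bb{Z}[x,y]$, and $C(2t)$ is a fixed linear functional of $T^{t}$.

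Finally, Cayley--Hamilton applied to $T$ produces a characteristic polynomial of the shape $\lambda^{3}-P\lambda^{2}+Q\lambda-R$ with $P,Q,R\in\bb{Z}[x,y]$, which forces the third-order recursion $C(2t)=PC(2t-2)-QC(2t-4)+RC(2t-6)$ of (\ref{rec}). The initial values $C(2)$, $C(4)$, $C(6)$ are checked by enumerating $i$ directly for $t=1,2,3$, while the explicit forms of $P$, $Q$, $R$---and in particular the coefficients $\left(\tfrac{p+1}{3}\right)^{2}$, $\left(\tfrac{p-2}{3}\right)^{2}$, $\left(\tfrac{2p-1}{3}\right)^{2}$, $p^{2}$---are read off as the elementary symmetric functions of the eigenvalues of $T$; these coefficients count the number of digit pairs $(i_{j},i_{j+1})$ producing each carry pattern, stratified by the residue $p\bmod 3$. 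The main obstacle I anticipate is the justification of the $3$-state reduction: one must verify that the six pairs $(m,n)$ give rise to only three genuinely distinct local contributions per $2$-digit block, and that minimizing $\alpha,\beta$ over $m$ commutes with the block-by-block processing of digits. Once that is established, parts (1)--(2) of Theorem \ref{e3} follow by extracting coefficients of $C(2t)$ and matching with Theorem \ref{m}.
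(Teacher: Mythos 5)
Your outline matches the paper's strategy only at the ends (specializing Theorem \ref{m} with $d=\delta_{2,p}$ to get parts (3)--(5), and a transfer matrix yielding a cubic-type recursion), but it skips the step that carries the real weight of the proof. The quantity to be counted, $|\{i:\mathfrak{min}(i)=a\}|$, involves a \emph{minimum} over the six pairs $(m,n)$, and your proposed generating polynomial $\sum_i x^{\alpha(i)}y^{\beta(i)}$ with $\alpha(i)=\min_m c(i+mk,k)$, $\beta(i)=\min_m c(i+mk,2k)$ is not something the transfer-matrix method can produce: the exponents are minima of several digit-position sums, so the weight of $i$ is not a product of local weights and hence not a linear functional of $T^{t}$. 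You flag exactly this ("minimizing over $m$ commutes with block-by-block processing") as an anticipated obstacle, but that commutation is not a routine verification -- it is the missing idea, and the paper avoids the problem altogether. In the paper the minimization is eliminated \emph{before} any transfer matrix appears: Lemma \ref{comp} (carry identities such as $c(\ov{j},\ov{mk})+c(\ov{j+mk},\ov{-mk})=2t$) and Lemma \ref{trs} (the map $\phi$ from $j$ to its coset representative $i$, together with a $2$-to-$1$ map handling the boundary case $t\in g(j)$) show that $|\{i:\mathfrak{min}(i)=a\}|=\sum_{a<b\le t}|\{j:\ \{c(j,k),c(j,2k)\}=\{a,b\}\}|$ for $0<a<t$; this is also precisely why the diagonal coefficients $e_{(a,a)}$ do not appear in part (2). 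Only after this reduction is the statistic being counted the \emph{unminimized} pair $(c(j,k),c(j,2k))$, which is additive over digit positions and therefore amenable to the transfer matrix; moreover it is this walk-counting polynomial, not your minimized one, that equals the $C(2t)$ of \eqref{rec} referenced in the statement.

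Two further points. The reduction to an effectively $3$-dimensional (per parity class) transfer matrix has nothing to do with "identifying orbits of the cyclic shift among $m\in\{0,1,2\}$" -- the parameter $m$ never enters the digraph. It comes from the structure of reachable simultaneous carry pairs when adding $k$ and $2k$ to $j$: only three of the four carry pairs can occur on each side, so the image of $M$ is spanned by $h_1,h_2,h_3,h'_1,h'_2,h'_3$ and the characteristic polynomial is $z^{6}-Pz^{4}+Qz^{2}-R$, a cubic in $z^{2}$, which gives \eqref{rec}. Finally, part (1) is not a coefficient sum of $C(2t)$: the closed form $e_0=\left(\frac{p+1}{3}\right)^{2t}(2^{t+1}-2)$ is obtained in Corollary \ref{rank} by a separate direct count of digit strings with $c(j,k)=0$ or $c(j,2k)=0$, combined with inclusion--exclusion and again with Lemma \ref{trs}; your proposal gives no route to this formula. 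As written, the argument therefore has a genuine gap at its central counting step.
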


Let $X$ be the complex character table of $K$ and $A$ the adjacency matrix of $G(p,\ell,t)$. Then all the entries of $X$ lie in $\bb{Z}[\zeta]$ for some primitive $p$th root of unity $\zeta$. We have by character orthogonality $\dfrac{1}{q}XX^{t}=I$ and 
\begin{equation}\label{orth}
\frac{1}{q} X A X^{t}= \mrm{diag}(r_{\psi})_{\psi},
\end{equation}
where $\psi$ runs over additive characters of $K$ and $r_{\psi}$ is as defined in \S\ref{lin}. Note that $r_{\psi}$ is an eigenvalue of $A$. We note that every prime $m \neq p$ is unramified in $\bb{Q}[\zeta]$. Let $\mathfrak{m}$ be a prime lying over $m$, then
the relation \eqref{orth} shows  similarity of matrices over the local PID $\left(\bb{Z}[\zeta]\right)_{\mathfrak{m}}$.
We can now conclude that $L=kI-A$ is similar to
$\mrm{diag}(0,\underbrace{u\ \ldots\ u}_{k \ \text{times}},\underbrace{v\ \ldots\ v}_{q-k-1 \ \text{times}})$, over $\left(\bb{Z}[\zeta]\right)_{\mathfrak{m}}$, for all primes $m\neq p$. This similarity implies $\left(\bb{Z}[\zeta]\right)_{\mathfrak{m}}$-equivalence of matrices. We have now proved the following result.

\begin{theorem}\label{pprime}
Consider the graph $G(p,\ell,t)$ with $p^{(\ell-1)t/2} \neq \ell-1$.  Let $C_{p'}$ be the largest subgroup of $C$ whose order is not divisible by $p$. Then $C_{p'}\cong \left(\frac{\bb{Z}}{u'\bb{Z}}\right)^{k} \times \left(\frac{\bb{Z}}{v'\bb{Z}}\right)^{q-k-1}$. Here $v'$ is the biggest divisor of $\sqrt{q} \dfrac{\sqrt{q}+(-1)^{t+1}}{\ell}$ that is coprime to $p$, and $u'$ is the biggest divisor of $u= v+(-1)^{t}\sqrt{q}$ that is coprime to $p$. 

\end{theorem}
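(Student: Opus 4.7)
The plan is to diagonalize the Laplacian using the complex character table of the additive group $K$, and then show that the resulting diagonal form actually gives the Smith normal form of $L$ after localizing at every prime $m \neq p$. The key ingredient is character orthogonality, which lets me work integrally once I extend scalars to $\bb{Z}[\zeta]$, where $\zeta$ is a primitive $p$-th root of unity.

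From \S\ref{lin}, the eigenvectors $[\chi] = \sum_{y \in K}\chi(y)[y]$ diagonalize $\mu_A$ with eigenvalues $r_\chi \in \{k, r_{\chi_1}, r_{\chi_\alpha}\}$. I would package this via the character matrix $X$ to obtain the two identities $XX^t = qI$ and $XAX^t = q\, \mrm{diag}(r_\psi)_\psi$, both holding over $\bb{Z}[\zeta]$. Fixing a rational prime $m\neq p$ and a prime $\mathfrak{m}$ of $\bb{Z}[\zeta]$ above $m$, the integer $q=p^{(\ell-1)t}$ is coprime to $m$ and hence a unit in the DVR $R_\mathfrak{m}:=\bb{Z}[\zeta]_\mathfrak{m}$. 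Therefore $X$ is invertible over $R_\mathfrak{m}$ with $X^{-1}=q^{-1}X^t$, and conjugating the Laplacian yields similarity, hence $R_\mathfrak{m}$-equivalence, of $L$ with the diagonal matrix that has one entry $0$, $k$ entries equal to $u$, and $q-k-1$ entries equal to $v$.

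The final step is descent from $R_\mathfrak{m}$ back to $\bb{Z}_{(m)}$. Because $m \neq p$, the prime $m$ is unramified in $\bb{Z}[\zeta]$, so $v_\mathfrak{m}$ agrees with $v_m$ on $\bb{Z}$; in particular the $m$-parts of $u$ and $v$ coincide with their $\mathfrak{m}$-parts. Since $R_\mathfrak{m}$ is free (and hence faithfully flat) over $\bb{Z}_{(m)}$ and $m$ is still a uniformizer in $R_\mathfrak{m}$, the invariant factors of $L$ at $m$ must match those obtained above. Assembling over all primes $m \neq p$ and extracting the $p'$-parts $u'$ and $v'$ of $u$ and $v$ then produces the stated form of $C_{p'}$. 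The one place I would take the most care is precisely this descent — verifying that extending scalars to $R_\mathfrak{m}$ does not collapse or split any $m$-primary cyclic factor of the cokernel — but given unramifiedness of $m$, this is a routine consequence of comparing elementary divisor expansions on the two sides and not a genuine obstacle.
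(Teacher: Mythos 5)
Your proposal is correct and follows essentially the same route as the paper: diagonalizing $A$ via the character table identities $XX^t=qI$ and $XAX^t=q\,\mrm{diag}(r_\psi)_\psi$, localizing $\bb{Z}[\zeta]$ at a prime $\mathfrak{m}$ above each $m\neq p$ where $q$ is a unit, and reading off the equivalence of $L$ with $\mrm{diag}(0,u,\ldots,u,v,\ldots,v)$. Your extra care about descent from $\bb{Z}[\zeta]_{\mathfrak{m}}$ to $\bb{Z}_{(m)}$ via unramifiedness is exactly the (implicit) justification the paper relies on, so nothing is missing.
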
 

\begin{example}
Implementing the Recursion in \eqref{rec} in a computer algebra system such as Sage, we can compute $C(8)$. Now application of Theorems \ref{e3} and \ref{pprime} yield the critical groups of the family of graphs $(G(p,3,4))_{p}$, with $p$ running over primes primitive $\pmod{3}$.

The $2$-part of the critical group of $G(2,3,4)$ is $\prod\limits_{i=1}^{9}\left(\dfrac{\bb{Z}}{2^{i}\bb{Z}}\right)^{e_{i}}$, where
$[e_{i}]_{i=1}^{9}=[ 32, 8, 16, 84, 1, 16, 8, 32, 28]$.
The $2$-complement of the critical group of $G(2,3,4)$ is $\bb{Z}/ 15 \bb{Z}$. 

The Sylow $p$-subgroup of the critical group of $G(p,3,4)$ (with $p\neq 2$) $\prod\limits_{i=1}^{8}\left(\dfrac{\bb{Z}}{p^{i}\bb{Z}}\right)^{e_{i}(p)}$, where
\begin{enumerate}
\item $e_{8}(p)= 510\left(\dfrac{(p+1)}{3}\right)^{8}-2$,
\item $e_{1}(p)=e_{7}(p)=256/6561p^8 + 1040/6561p^7 + 1120/6561p^6 - 784/6561p^5 - 2240/6561p^4 -784/6561p^3 + 1120/6561p^2 + 1040/6561p + 256/6561$,
\item $e_{2}(p)=e_{6}(p)=776/6561p^8 +
592/6561p^7 - 2248/6561p^6 - 1904/6561p^5 + 320/6561p^4 -
1904/6561p^3 - 2248/6561p^2 + 592/6561p + 776/6561$,
\item $e_{3}(p)=e_{5}(p)=304/2187p^8 -
448/2187p^7 - 128/2187p^6 + 608/2187p^5 - 32/2187p^4 + 608/2187p^3
- 128/2187p^2 - 448/2187p + 304/2187$,
\item and $e_{4}(p)= 871/2187p^8 - 352/2187p^7 +
448/2187p^6 - 544/2187p^5 - 56/2187p^4 - 544/2187p^3 + 448/2187p^2
- 352/2187p + 871/2187$.
\end{enumerate}

The $p$-complement of the critical group of $G(p,3,4)$ (with $p\neq 2$) is $\bb{Z}/u'v' \bb{Z}$ ,where $u'= \dfrac{p^{4}-1}{3}$ and $v'=\dfrac{p^{4}+2}{3}$.  
\end{example}

\begin{remark}
For a fixed $t$, Theorem \ref{e3} implies that the multiplicities of the $p$-elementary divisors of the Laplacian  of $G(p,3,t)$ are polynomial expressions in $p$ of degree $2t$. We were however unable to extend the techniques in \S\ref{3} to prove similar results in the general case.       
\end{remark}
\section{Smith normal form}\label{Smith normal form}
Let $\mathfrak{R}$ be a Principal Ideal Domain, $\mathfrak{p} \in \mathfrak{R}$ a prime, and $Z:\mathfrak{R}^{m} \to \mathfrak{R}^{n}$ be a linear transformation. 
By the structure theorem for finitely generated modules over PIDs, we have $\{\alpha_{i}\}_{i=1}^{s} \subset \mathfrak{R} \setminus \{0\}$ such that $\alpha_{i} \mid \alpha_{i+1}$ and 
$$\mathrm{coker}(Z)\cong \mathfrak{R}^{n-s}\oplus \bigoplus\limits_{i=1}^{s} \mathfrak{R} /\alpha_{i}\mathfrak{R}.$$  
Let $[Z]$ denote the matrix representation of $Z$ with respect to the standard basis. Then the above equation tells us that we can find $P \in \mathrm{GL}_{n}(\mathfrak{R})$, and $Q \in \mathrm{GL}_{m}(\mathfrak{R})$ such that 
$$P[Z]Q=\left[
\begin{array}{c|c}
Y & 0_{(s \times n-s)} \\
\hline
0_{(m-s \times s)} & 0_{(n-s \times n-s)}
\end{array}
\right],$$ 
where $Y=\mrm{diag}(\alpha_{1} \ldots \alpha_{s})$. The diagonal form $P[Z]Q$ is called the Smith normal form of $Z$. Its uniqueness (up to multiplication of $\alpha_{i}$ by units) is also guaranteed by the aforementioned structure theorem. By invariant factors (elementary divisors) of $Z$, we mean the invariant factors (respectively elementary divisors) of the module $\mathrm{coker}(Z)$. 

The following is a well known result (for eg. see Theorem $2.4$ of \cite{sta}) that gives a description of the Smith normal form in terms of minor determinants.

\begin{lemma}\label{minor}
Let $Z$, $[Z]$, and $\{\alpha_{i}\}_{1\leq i \leq s}$ be as described above.
Given $1 \leq i \leq s$, let $d_{i}(Z)$ be the GCD of all $i\times i$ minor determinants of $[Z]$, and let $d_{0}(Z)=1$. We then have $\alpha_{i}=d_{i}([Z])/d_{i-1}([Z])$. 
\end{lemma}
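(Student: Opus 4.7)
The plan is to argue in two independent steps: first establish that the invariants $d_i$ are unchanged when the matrix is multiplied on either side by an invertible matrix over $\mathfrak{R}$, and then compute these invariants directly from the Smith normal form.

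For the invariance step, I would invoke the Cauchy--Binet formula: for any matrices $M_1, M_2$ over $\mathfrak{R}$ for which the product $M_1 M_2$ is defined, every $i\times i$ minor of $M_1 M_2$ is an $\mathfrak{R}$-linear combination of $i\times i$ minors of $M_1$ (and of $M_2$). Consequently, $d_i(M_1)$ divides $d_i(M_1 M_2)$, and similarly $d_i(M_2)$ divides $d_i(M_1 M_2)$. Specializing to $M_1 = P \in \mathrm{GL}_n(\mathfrak{R})$ and $M_2 = [Z]$, the same reasoning applied with $P^{-1}$ gives the reverse divisibility $d_i(P[Z]) \mid d_i(P^{-1}P[Z]) = d_i([Z])$. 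Hence $d_i(P[Z]) = d_i([Z])$ up to a unit, and an analogous argument shows $d_i([Z]Q) = d_i([Z])$ for $Q \in \mathrm{GL}_m(\mathfrak{R})$. In particular, $d_i([Z])$ is determined (up to units) by any matrix $\mathfrak{R}$-equivalent to $[Z]$, including its Smith normal form.

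For the computation step, let $D = P[Z]Q$ denote the Smith normal form, with diagonal entries $\alpha_1, \ldots, \alpha_s$ followed by zeros. Any $i\times i$ submatrix of $D$ is zero unless it is formed by selecting the same set of row and column indices $j_1 < j_2 < \cdots < j_i$ with each $j_k \le s$; in that case the submatrix is $\mrm{diag}(\alpha_{j_1}, \ldots, \alpha_{j_i})$ with minor $\alpha_{j_1}\alpha_{j_2}\cdots \alpha_{j_i}$. Because $\alpha_k \mid \alpha_{k+1}$ and $j_k \ge k$, we have $\alpha_k \mid \alpha_{j_k}$, so $\alpha_1 \alpha_2 \cdots \alpha_i$ divides every such minor. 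As the choice $j_k = k$ realizes the value $\alpha_1 \cdots \alpha_i$ itself, we conclude
\[
d_i([Z]) = d_i(D) = \alpha_1 \alpha_2 \cdots \alpha_i
\]
up to a unit. Combined with $d_0([Z]) = 1$, this gives $\alpha_i = d_i([Z])/d_{i-1}([Z])$, as required.

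The only genuinely substantive point is the Cauchy--Binet identity (needed to control how minors behave under matrix multiplication); everything afterward is a routine check using the divisibility chain $\alpha_1 \mid \alpha_2 \mid \cdots \mid \alpha_s$. Since the invariance of $d_i$ under $\mathrm{GL}$-action and the explicit computation on a diagonal matrix are both standard, the proof is essentially self-contained once Cauchy--Binet is available.
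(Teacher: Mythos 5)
Your proof is correct: the two steps (invariance of the determinantal divisors $d_i$ under multiplication by $\mathrm{GL}_n(\mathfrak{R})$ and $\mathrm{GL}_m(\mathfrak{R})$ via Cauchy--Binet, then direct evaluation of $d_i$ on the diagonal form using the chain $\alpha_1 \mid \alpha_2 \mid \cdots \mid \alpha_s$) constitute the standard argument, and every assertion checks out over a general PID since you avoid relying on elementary row/column operations. The paper itself offers no proof of this lemma -- it simply cites it as Theorem~2.4 of the reference \cite{sta} -- so your write-up supplies exactly the classical argument behind that citation; the only cosmetic slip is saying an $i\times i$ \emph{submatrix} of $D$ is zero when row and column index sets differ, whereas what is zero (and all you need) is its \emph{determinant}, since such a submatrix contains a zero row.
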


Define $e_{j}(Z)=|\{\alpha_{i}| \ v_{\mathfrak{p}}(\alpha_{i})=j \}|$. Now $e_{j}(Z)$ is the multiplicity of $\mathfrak{p} ^{j}$ as $\mathfrak{p}$-elementary divisors of the $\mathfrak{R}$-module $\mathrm{coker}(Z)$. If $R=\bb{Z},$ then $e_{j}(Z)$ is the multiplicity of $ \mathfrak{p}^{j}$ as an elementary divisor of the abelian group $\mathrm{coker}(Z)$.

Let $\mathfrak{R}_{\mathfrak{p}}$ be the $\mathfrak{p}$-adic completion of $\mathfrak{R}$. We have 
$$\mathfrak{R}_{\mathfrak{p}}^{n}/T(\mathfrak{R}_{\mathfrak{p}}^{m})\cong \mathfrak{R}_{\mathfrak{p}}^{n-s}\oplus \bigoplus\limits_{j>0} \left(\mathfrak{R}_{\mathfrak{p}}/\mathfrak{p}^{j} \mathfrak{R}_{\mathfrak{p}}\right)^{e_{j}(\mathfrak{p})}.$$
Define $M_{j}(Z):=\{x \in \mathfrak{R}_{\mathfrak{p}}^{m}| \ Z(x) \in \mathfrak{p}^{j}\mathfrak{R}_{\mathfrak{p}}^{n}\}$. 
We have $\mathfrak{R}^{m}=M_{0}(Z)\supset M_{1}(Z) \supset \ldots \supset M_{n}(Z) \supset \cdots$. 

Let $\bb{F}=\mathfrak{R}_{\mathfrak{p}}/ \mathfrak{p} \mathfrak{R}_{\mathfrak{p}}$.
If $M \subset \mathfrak{R}_{\mathfrak{p}}^{m}$ is a submodule, define $\ov{M}=(M+\mathfrak{p} \mathfrak{R}_{\mathfrak{p}}^{m})/ \mathfrak{p} \mathfrak{R}_{\mathfrak{p}}^{m}$. Then  $\ov{M}$ is an $\bb{F}$-vector space. The following Lemma follows from the structure theorem.
\begin{lemma}\label{eldivcal}
$e_{j}(Z):=\dim(\ov{M_{j}(Z)}/\ov{M_{j+1}(Z)})$.
\end{lemma}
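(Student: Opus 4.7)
The plan is to pass to the $\mathfrak{p}$-adic completion $\mathfrak{R}_{\mathfrak{p}}$, reduce $Z$ to Smith normal form there, and compute $\overline{M_j(Z)}$ coordinate-by-coordinate in the diagonal form. The advantage of working over $\mathfrak{R}_{\mathfrak{p}}$ is that every nonzero invariant factor $\alpha_i$ becomes a unit multiple of $\mathfrak{p}^{v_{\mathfrak{p}}(\alpha_i)}$, so the $\mathfrak{p}$-adic data separate cleanly from the rest.

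First I would verify that both sides of the claimed identity depend only on the $\mathrm{GL}_n(\mathfrak{R}_{\mathfrak{p}})\times\mathrm{GL}_m(\mathfrak{R}_{\mathfrak{p}})$-equivalence class of $Z$. If $Z' = PZQ$ with $P,Q$ invertible over $\mathfrak{R}_{\mathfrak{p}}$, then $M_j(Z') = Q^{-1}\bigl(M_j(Z)\bigr)$, and the reduction of $Q^{-1}$ modulo $\mathfrak{p}$ gives an $\mathbb{F}$-linear isomorphism carrying $\overline{M_j(Z)}$ onto $\overline{M_j(Z')}$ compatibly with the filtration; the left-hand side $e_j(Z)$ is plainly invariant as well. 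This reduction allows me to assume $Z = \mathrm{diag}(\mathfrak{p}^{b_1},\ldots,\mathfrak{p}^{b_s},0,\ldots,0)$, where $b_i = v_{\mathfrak{p}}(\alpha_i)$, padded by zeros to an $m\times n$ matrix.

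With $Z$ in this diagonal form, $x=(x_1,\ldots,x_m)$ lies in $M_j(Z)$ precisely when $v_{\mathfrak{p}}(x_i)\geq j-b_i$ for each $i\leq s$, with no constraint on $x_i$ for $i>s$, giving
$$M_j(Z) \;=\; \prod_{i=1}^{s} \mathfrak{p}^{\max(0,\,j-b_i)}\mathfrak{R}_{\mathfrak{p}} \;\times\; \mathfrak{R}_{\mathfrak{p}}^{\,m-s}.$$
Reducing modulo $\mathfrak{p}$, the $i$-th factor with $i\leq s$ contributes a copy of $\mathbb{F}$ exactly when $b_i\geq j$, while each of the last $m-s$ factors always contributes a copy of $\mathbb{F}$. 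Hence $\dim_{\mathbb{F}}\overline{M_j(Z)} = (m-s) + |\{i:b_i\geq j\}|$, and taking the successive difference telescopes to $|\{i:b_i=j\}|=e_j(Z)$, as required.

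There is no serious obstacle; the only point that wants care is the invariance step, because $M_j$ is a submodule of the source module $\mathfrak{R}_{\mathfrak{p}}^m$, so one must transport $Z$ by $Q$ acting on the source rather than conflating it with the action on the target. Once that is correctly recorded, the rest is a short diagonal computation.
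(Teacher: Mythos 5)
Your proof is correct and is essentially the argument the paper has in mind: the paper simply asserts the lemma "follows from the structure theorem," and your reduction to Smith normal form over $\mathfrak{R}_{\mathfrak{p}}$ followed by the coordinatewise computation of $\ov{M_{j}(Z)}$ is the standard way to carry that out, with the invariance step ($M_j(PZQ)=Q^{-1}M_j(Z)$) handled correctly. The only blemish is cosmetic: since $Z:\mathfrak{R}^{m}\to\mathfrak{R}^{n}$, the padded diagonal matrix should be $n\times m$ rather than $m\times n$, which does not affect your computation on the source coordinates.
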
  
So we have,
\begin{equation}\label{dim}
\dim(\ov{M_{j}(Z)})-\dim(\ov{\ker(Z)})=\sum\limits_{t\geq j} e_{t}(Z).  
\end{equation}
The following is Lemma $3.1$ of \cite{SD}. We include a short proof for the convenience of the reader.
\begin{lemma}\label{main}
Let $Z$, $\mathfrak{p}$, $M_{i}(Z)$, and $e_{i}(Z)$ be as defined above. Let $\kappa(Z)$ be the $\mathfrak{p}$-adic valuation of the product of a complete set of non-zero invariant factors of $Z$, counted with multiplicities. Suppose that we have two sequences of integers $0<t_{1}<t_{2} \ldots <t_{j}$ and $s_{1}>s_{2} \ldots >s_{j}>s_{j+1}=\dim(\ov{\ker(Z)})$ satisfying the following conditions.
\begin{enumerate}
\item $\dim(\ov{M_{t_{i}}(Z)})\geq s_{i}$ for $1\leq i \leq j$ 
\item $\kappa(Z)=\sum \limits_{i=1}^{j}(s_{i}-s_{i+1})t_{i}$,
\end{enumerate}
Then the following hold.
\begin{enumerate}[label=(\alph*)]
\item $e_{0}(Z)=m-s_{1}$.
\item $e_{t_{i}}(Z)=s_{i}-s_{i+1}$.
\item $e_{a}(Z)=0$ for $a \notin \{t_{1} \ldots t_{i}, \ldots t_{j}\}$.
\end{enumerate}
\end{lemma}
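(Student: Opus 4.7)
The plan is to translate both hypotheses into statements about the non-increasing tail sum $f(k) := \sum_{t \geq k} e_t(Z)$, show that the two together force $f$ to agree with an explicit step function, and read off the elementary-divisor multiplicities by differencing $f$.

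For hypothesis (1), I would first swap the order of summation in the defining identity $\kappa(Z) = \sum_{a \geq 1} a \, e_a(Z)$ to get $\kappa(Z) = \sum_{k \geq 1} f(k)$. Equation \eqref{dim} identifies $f(t_i) = \dim(\ov{M_{t_i}(Z)}) - \dim(\ov{\ker(Z)})$, so, using $s_{j+1} = \dim(\ov{\ker(Z)})$, hypothesis (1) becomes $f(t_i) \geq s_i - s_{j+1}$. Since $f$ is non-increasing and the $t_i$ are strictly increasing, this extends to $f(k) \geq s_i - s_{j+1}$ for every integer $k \in (t_{i-1}, t_i]$, with the convention $t_0 := 0$. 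For hypothesis (2) I would perform an Abel summation, rewriting
\[
\sum_{i=1}^{j}(s_i - s_{i+1})t_i \;=\; \sum_{i=1}^{j}(t_i - t_{i-1})(s_i - s_{j+1}),
\]
which is exactly $\sum_{k \geq 1} g(k)$ for the step function $g$ equal to $s_i - s_{j+1}$ on $(t_{i-1}, t_i]$ for $1 \leq i \leq j$ and zero for $k > t_j$. Thus $f \geq g$ pointwise while $\sum_{k} f(k) = \kappa(Z) = \sum_{k} g(k)$, and this forces $f = g$ on all of $\bb{Z}_{\geq 1}$.

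With $f$ pinned down, the three conclusions fall out of $e_a(Z) = f(a) - f(a+1)$. Conclusion (b) is the telescoping $f(t_i) - f(t_i + 1) = (s_i - s_{j+1}) - (s_{i+1} - s_{j+1}) = s_i - s_{i+1}$; conclusion (c) is immediate because $g$, and hence $f$, is constant on each interval strictly between consecutive $t_i$ and vanishes beyond $t_j$; and for (a) I would apply \eqref{dim} at $j = 0$, noting $\ov{M_0(Z)} = \bb{F}^m$, to get $f(0) = m - \dim(\ov{\ker(Z)})$, and then subtract $f(1) = g(1) = s_1 - \dim(\ov{\ker(Z)})$ to obtain $e_0(Z) = m - s_1$. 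The one step that needs care is the Abel summation on hypothesis (2): checking that the boundary contributions cancel is precisely what makes the lower bounds from (1) and the equality from (2) live in the same tail-sum form, and once that match is achieved the rest is an elementary comparison of two non-negative sequences with equal total mass.
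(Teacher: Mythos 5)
Your proposal is correct and takes essentially the same route as the paper: your pointwise comparison of the tail-sum function $f$ with the step function $g$ having equal total mass is exactly the paper's chain of inequalities \eqref{A}--\eqref{B} (built from \eqref{dim}, monotonicity, and hypotheses (1)--(2)), with the Abel summation made explicit. The conclusions are then read off by differencing, just as in the paper's final step.
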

\begin{proof}
We have 
\begin{equation}\label{A}
\kappa(Z)=\sum\limits_{i\geq 1}ie_{i}(Z) 
  \geq \sum \limits_{k=1}^{j-1}\left(\sum\limits_{t_{k}\leq i < t_{k+1}} ie_{i}(Z)\right)+\sum_{i \geq t_{j}}ie_{i}(Z)
 \geq \sum \limits_{k=1}^{j-1}\left(t_{k}\sum\limits_{t_{k}\leq i < t_{k+1}} e_{i}(Z)\right)+t_{j}\sum_{i \geq t_{j}}e_{i}(Z).
\end{equation}

Application of equation \eqref{dim} given above yields 

\begin{align*} 
 \sum \limits_{k=1}^{j-1}\left(t_{k}\sum\limits_{t_{k}\leq i < t_{k+1}} e_{i}(Z)\right)+t_{j}\sum_{i \geq t_{j}}e_{i}(Z)
 & = \sum \limits_{k=1}^{j-1}\left(t_{k}(\dim(\ov{M_{t_{k}}(Z)})-\dim(\ov{M_{t_{k+1}}(Z)}))\right)+t_{j}\left(\dim(\ov{M_{t_{j}}(Z)})-\dim(\ov{\ker(Z)})\right).
 \end{align*}
 Now application of conditions (1) and (2) in the statement gives us
\begin{equation}\label{B}
 \sum \limits_{k=1}^{j-1}\left(t_{k}(\dim(\ov{M_{t_{k}}(Z)})-\dim(\ov{M_{t_{k+1}}(Z)}))\right)+t_{j}\left(\dim(\ov{M_{t_{j}}(Z)})-\dim(\ov{\ker(Z)})\right)\geq \sum \limits_{i=1}^{j}(s_{i}-s_{i+1})t_{i}=\kappa(Z).
\end{equation}
So the inequations \eqref{A} and \eqref{B} are in fact equations and thus the lemma follows.    
\end{proof}

The following result is $12.8.4$ of \cite{BH}.
\begin{lemma}\label{eigenval}
Let $Z: \mathfrak{R}^{n} \to \mathfrak{R}^{n}$ be a linear transformation and $\phi \in  \mathfrak{R}$ be an eigenvalue for $Z$, with geometric multiplicity $c$. Then $\dim(\ov{M_{v_{\mathfrak{p}}(\phi)}}(Z)) \geq c$. 
\end{lemma}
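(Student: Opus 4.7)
The plan is to exploit the fact that if $v \in \mathfrak{R}_{\mathfrak{p}}^{n}$ is a $\phi$-eigenvector for $Z$, then $Z(v)=\phi v \in \mathfrak{p}^{v_{\mathfrak{p}}(\phi)}\mathfrak{R}_{\mathfrak{p}}^{n}$, so $v \in M_{v_{\mathfrak{p}}(\phi)}(Z)$. Thus the entire $\phi$-eigenspace is automatically contained in the filtration piece $M_{v_{\mathfrak{p}}(\phi)}(Z)$; the content of the lemma is that upon reducing modulo $\mathfrak{p}$, we still see $c$ independent directions.

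More precisely, let $F$ be the field of fractions of $\mathfrak{R}_{\mathfrak{p}}$ and let $E \subseteq F^{n}$ be the $\phi$-eigenspace of $Z$, a $c$-dimensional $F$-subspace by the definition of geometric multiplicity. Set $N := E \cap \mathfrak{R}_{\mathfrak{p}}^{n}$. Since $\mathfrak{R}_{\mathfrak{p}}$ is a DVR and $N$ is a finitely generated torsion-free $\mathfrak{R}_{\mathfrak{p}}$-module whose $F$-span is $E$, it is free of rank exactly $c$. By the preceding paragraph, $N \subseteq M_{v_{\mathfrak{p}}(\phi)}(Z)$, so $\overline{N} \subseteq \overline{M_{v_{\mathfrak{p}}(\phi)}(Z)}$ inside $\bb{F}^{n}$.

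The one step that requires care, and is the main obstacle, is verifying that $\dim_{\bb{F}}\overline{N}=c$ rather than something smaller: a random basis of $N$ can very well reduce to a dependent set mod $\mathfrak{p}$. I would handle this by showing $N$ is \emph{saturated} in $\mathfrak{R}_{\mathfrak{p}}^{n}$, i.e.\ if $x \in \mathfrak{R}_{\mathfrak{p}}^{n}$ and $\mathfrak{p} x \in N$, then $x \in N$. This is immediate: $\mathfrak{p} x \in N \subseteq E$ forces $x \in E$ (as $E$ is an $F$-subspace), whence $x \in E \cap \mathfrak{R}_{\mathfrak{p}}^{n}=N$. Therefore $\mathfrak{R}_{\mathfrak{p}}^{n}/N$ is torsion-free, hence free (over the DVR $\mathfrak{R}_{\mathfrak{p}}$), so $N$ is a direct summand of $\mathfrak{R}_{\mathfrak{p}}^{n}$. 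Any basis of $N$ can be extended to a basis of $\mathfrak{R}_{\mathfrak{p}}^{n}$, so its image in $\bb{F}^{n}$ remains linearly independent. This gives $\dim_{\bb{F}}\overline{N}=c$, and combined with $\overline{N} \subseteq \overline{M_{v_{\mathfrak{p}}(\phi)}(Z)}$ finishes the proof.
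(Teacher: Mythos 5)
Your proof is correct and follows essentially the same route as the paper: intersect the $\phi$-eigenspace over the fraction field with $\mathfrak{R}_{\mathfrak{p}}^{n}$, note this lattice lies in $M_{v_{\mathfrak{p}}(\phi)}(Z)$, and use that it is a pure submodule (direct summand) of rank $c$ so that its reduction mod $\mathfrak{p}$ still has dimension $c$. The only difference is that you explicitly verify the purity/saturation claim that the paper simply asserts, which is a welcome addition rather than a deviation.
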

\begin{proof}
Let $Fr(R)$ be the field of fractions of $\mathfrak{R}_{\mathfrak{p}}$. We can extend $Z$ to a unique element of $\mrm{End}_{Fr(R)}(Fr(R)^{n})$. For convenience, let us denote this element by $Z$ as well.
Consider the eigenspace $V_{\phi}=\{x \in Fr(R)^{n}| Z(x)=\phi x\}$. Then $V_{\phi}\cap \mathfrak{R}_{\mathfrak{p}}^{n}$ is a pure $\mathfrak{R}_{\mathfrak{p}}$-submodule ($\mathfrak{R}_{\mathfrak{p}}$-direct summand) of
$\mathfrak{R}_{\mathfrak{p}}^{n}$ of rank $c=\dim(V_{\phi})$. It is clear that $V_{\phi}\cap \mathfrak{R}_{\mathfrak{p}}^{n} \subset M_{d}(Z)$. As $V_{\phi}\cap \mathfrak{R}_{\mathfrak{p}}^{n}$ is pure, we have $\ov{V_{\phi}\cap \mathfrak{R}_{\mathfrak{p}}^{n}} \subset \ov{M_{{v_{\mathfrak{p}}(\phi)}}(Z)}$. 
\end{proof}

\section{Character sums and block diagonal form of  $L$.}\label{block}
We recall the definition of the graph $G(p,\ell,t)$, its adjacency matrix $A$ and Laplaicain matrix $L$. This is the graph with vertex set $K=\bb{F}_{q}$ (with $q=p^{(\ell-1)t}$) and edge set $ \{\{x,y\}\ |\ x,y \in K\ \text{and}\ x-y \in S\}$, where $S$ is the subgroup of $\K$ with index $\ell$. By $C$, we denote the critical group of $G(p,\ell,t)$. We saw in \S\ref{lin} that $L$ has eigenvalues $0$, $v= \sqrt{q} \dfrac{\sqrt{q}+(-1)^{t+1}}{\ell}$ and $u= v+(-1)^{t}\sqrt{q}$, with multiplicities $1$, $q-k-1$ and $k$ respectively.  

Let $\xi$ be a primitive $(q-1)$-st root of unity in the algebraic closure of $\bb{Q}_{p}$. Then $\bb{Q}_{p}(\xi)$ is the unique unramified extension  of degree $(\ell-1)t$ over $\bb{Q}_{p}$. Let $R$ be the ring of integers in $\bb{Q}_{p}(\xi)$, then $pR$ is maximal in $R$ with $R/pR \cong \bb{F}_{q}=K$. 
Let $R^K$ denote the free $R$-module with $\{[x]|\ x\in K\}$ as a basis set. We recall from \S\ref{nota}, endomorphisms $\mu_{A}, \mu_{L}\in\mathrm{End}_{R}(R^K)$, with matrix representations (with respect to $\{[x]|\ x\in K\}$) $A$ and $L$ respectively. We have $\mu_{L}([x])=k[x]-\sum\limits_{s \in S}[x+s]$ and $\mu_{A}([x])= \sum\limits_{s \in S} [x+s],\ x \in K$. As $R$ is an unramified extension of $\bb{Z}_{p}$, the multiplicity of $p^{j}$ as an elementary divisor of $\mu_{L}$ is the same as that of $p^{j}$ as an elementary divisor of the integer matrix $L$.

Let $T:\K \to R^{\times}$ be the Teichm\"{u}ller character generating the cyclic group $\mrm{Hom}(\K,\ R^{\times})$. Then $\K$ acts $R^{K}$, which decomposes as the direct sum $R[0]\oplus R^{\K}$. Now the regular module $R^{\K}$ decompose further into a direct sum of $\K$-invariant submodules of rank $1$, affording the characters $T^{i}$, $i=0,\ldots,q-2$. The component affording $T^{i}$ is spanned by $f_{i}:= \sum\limits_{x \in \K} T^{i}(x^{-1})[x]$.     
Therefore $\{\mathbf{1}, f_{1} \ldots f_{q-2}, [0]\}$ is a basis for $R^K$, where $\mathbf{1}:=f_{0}+[0]=\sum\limits_{x \in K}[x]$.

Given an $R$-free $RS$-module $M$ and a character $\chi:S \to R^\times$, the isotypic component of $M$ corresponding to $\chi$ is the $RS$-submodule $M_{\chi}:=\{m \in M| \ sm= \chi(s)m\ \text{for all}\ s\in S \}$. 
For $0<j \leq k-1$, let $N_{j}$ denote the $R$-submodule of $R^K$ with $\{f_{j+mk}| \ 0\leq m \leq \ell-1\}$ as a basis set. Define $N_{0}$ to be the $R$-submodule of $R^{K}$ with $\{\mathbf{1}, [0], f_{k}, \ldots f_{(\ell-1)k}\}$ as a basis set. Then $N_{i}$ is the isotypic component for the character $T^{i}|_{S}$ of the group $S$. We now have 
\begin{equation}\label{de}
R^K=N_{0}\oplus N_{1}\ldots \oplus N_{k-1}.
\end{equation} 

Since $S$ is a group of automorphisms for $G(p, \ell, t)$, the $R$-linear maps $\mu_{A}$ and $\mu_{L}$ are in fact $RS$-module endomorphisms. It follows that $\mu_A$ and $\mu_L$ preserve the decomposition \eqref{de}. 
For $0 < i \leq k-1$, let $L_{i}$ denote the matrix of $\mu_{L}|_{N_{i}}$ with respect to the ordered basis $(f_{i+mk}| \ 0\leq m \leq \ell-1)$. Let $L_{0}$ be the matrix of $\mu_{L}|_{N_{0}}$ with respect to the ordered basis $(\mathbf{1}, [0], f_{k}, \ldots f_{(\ell-1)k})$. 
So with respect to the ordered basis $(\mathbf{1}, [0], f_{k}, \ldots f_{(\ell-1)k}) \cup_{i=1}^{k-1} (f_{i+mk}| \ 0\leq m \leq \ell-1)$, the matrix representation of the $R$-linear map $\mu_{L}$ is $\mathrm{diag}(L_{0},L_{1}, \ldots ,L_{k-1})$. We proved the following Lemma.

\begin{lemma}\label{blockd}
As $R$-matrices, $L$ is similar to the block diagonal matrix $\mathrm{diag}(L_{0},L_{1}, \ldots ,L_{k-1})$. 
\end{lemma}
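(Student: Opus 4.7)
The plan is essentially to make rigorous the argument already outlined in the paragraphs immediately preceding the statement. The core observation is that the group $S$ acts on $R^K$ via $s\cdot[x]=[sx]$ (the restriction of the natural $\K$-action used to define the $f_i$), and that $\mu_L$ commutes with this action. One checks directly that for $s\in S$,
$$s\cdot\mu_L([x])=k[sx]-\sum_{s'\in S}[sx+ss']=k[sx]-\sum_{s'\in S}[sx+s']=\mu_L([sx]),$$
using that multiplication by $s\in S$ permutes $S$. Hence $\mu_L$ is in $\mathrm{End}_{RS}(R^K)$; the same argument (or the identity $L=kI-A$) also handles $\mu_A$.

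Next, because $\mu_L$ is $RS$-equivariant, it must preserve every isotypic component. Applied to the characters $T^i|_S$ for $0\le i\le k-1$, this gives $\mu_L(N_i)\subseteq N_i$, where the $N_i$ are the submodules defined just before the lemma. It then remains to concatenate the chosen ordered bases of $N_0,N_1,\ldots,N_{k-1}$ into an ordered basis $\mathcal{B}$ of $R^K$ and observe that, with respect to $\mathcal{B}$, the map $\mu_L$ is represented by $\mathrm{diag}(L_0,L_1,\ldots,L_{k-1})$ by the very definition of the $L_i$. Similarity of matrices over $R$ then follows from the fact that the change of basis matrix from $\{[x]\mid x\in K\}$ to $\mathcal{B}$ lies in $\mathrm{GL}_q(R)$.

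The only point requiring any real care is the last one: that $\mathcal{B}$ is genuinely an $R$-basis of $R^K$, not merely a $\mathrm{Frac}(R)$-basis. This reduces to the assertion already made before the lemma that $\{\mathbf{1},f_1,\ldots,f_{q-2},[0]\}$ is an $R$-basis of $R^K$, which in turn rests on the Fourier inversion formula expressing each $[x]$ as an $R$-linear combination of the $f_i$. Since $q-1$ is coprime to $p$, it is a unit in $R$, so the inversion formula is valid over $R$ and $\mathcal{B}$ is indeed an $R$-basis. This is really the only non-formal ingredient; everything else is routine bookkeeping.
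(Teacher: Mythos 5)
Your argument is correct and is essentially the paper's own: the lemma is proved there by the preceding discussion, namely that $S$ acts on $R^K$ by multiplication, that $\mu_L$ (being built from the $S$-invariant connection set) is an $RS$-endomorphism and hence preserves the isotypic decomposition $R^K=\bigoplus_i N_i$, and that concatenating the chosen bases of the $N_i$ gives the block diagonal matrix. Your added care about $\{\mathbf{1},f_1,\ldots,f_{q-2},[0]\}$ being a genuine $R$-basis (via Fourier inversion and $q-1$ being a unit in $R$) is a correct filling-in of a detail the paper leaves implicit, not a different route.
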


Following conventions in \cite{Ax}, we extend the $T^{i}$'s to $K$. As per this convention, the character $T^{0}$ maps every element of $K$ to $1$, while $T^{q-1}$ maps $0$ to $0$. All other characters map $0$ to $0$. For two integers $a,b$ the Jacobi sum $J(T^{a},T^{b})$ is $\sum\limits_{x \in K}T^{a}(x)T^{b}(1-x)$. We refer the reader to Chapter $2$ of \cite{BKR} for formal properties of Jacobi sums. Following the conventions established, for $a \nequiv 0 \pmod{q-1}$, we have $J(T^{a},T^{0})=0$ and $J(T^{a},T^{q-1})=-1$.
 
The following Lemma describes action of $L_{i}$ on $N_{i}$.
\begin{lemma}\label{im} 
\begin{enumerate}
\item If $k \nmid i$, we have 
$\mu_{L}(f_{i})= \dfrac{1}{\ell}\left(qf_{i}-\sum\limits_{m=1}^{\ell-1} J(T^{-i},T^{-mk})f_{i+mk}\right)$.
\item For $1 \leq j \leq \ell-1$, we have
$\mu_{L}(f_{jk})=\dfrac{1}{\ell}\left(\mathbf{1} +qf_{jk} -\sum\limits_{m \neq -j,0} J(T^{-jk},T^{-mk})f_{jk+mk} - q[0]\right)$.
\item $\mu_{L}([0])=\dfrac{1}{\ell}\left(q[0]-\sum\limits_{m=1}^{\ell-1} f_{mk} -\mathbf{1} \right)$.
\item $\mu_{L}(\mathbf{1})=0$.
\end{enumerate}
\end{lemma}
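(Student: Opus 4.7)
The plan is to prove each part by direct character-sum computation, exploiting the conventions $T^{0}(0)=1$, $T^{a}(0)=0$ for $a\not\equiv 0 \pmod{q-1}$, multiplicative character orthogonality, and the definition of the Jacobi sum. Part~(4) is immediate: each row of $L$ sums to $k-|S|=0$, so $\mu_{L}(\mathbf{1})=0$.

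For part~(3), I would expand $\sum_{s\in S}[s]$ using the orthogonality relation $\mathbf{1}_{S}(w) = \tfrac{1}{\ell}\sum_{m=0}^{\ell-1}T^{mk}(w)$ on $\K$. This gives $\sum_{s\in S}[s] = \tfrac{1}{\ell}\sum_{m=0}^{\ell-1} f_{-mk}$; using $f_{0}=\mathbf{1}-[0]$ and reindexing $m\mapsto \ell-m$ for $m\neq 0$ to rewrite $f_{-mk}$ as $f_{(\ell-m)k}$, substitution into $\mu_{L}([0])=k[0]-\sum_{s\in S}[s]$ together with $k+\tfrac{1}{\ell}=\tfrac{q}{\ell}$ produces the stated formula.

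Parts~(1) and~(2) form the heart of the computation. Starting from $f_{i}=\sum_{x\in \K}T^{-i}(x)[x]$ and $\mu_{A}([x])=\sum_{s\in S}[x+s]$, I would substitute $y=x+s$, swap sums, and obtain $\mu_{A}(f_{i})=\sum_{y\in K}\bigl(\sum_{s\in S} T^{-i}(y-s)\bigr)[y]$. Replacing $\mathbf{1}_{S}$ by the averaged character sum decomposes the inner sum into $\ell$ pieces of the form $\sum_{w\in \K} T^{mk}(w)T^{-i}(y-w)$. For $y\neq 0$ the substitution $w=yu$ factors out $T^{mk-i}(y)$ and leaves a sum over $u\in\K$ that evaluates to $-1$ when $m=0$ and to the Jacobi sum $J(T^{mk},T^{-i})$ when $m\geq 1$. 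Summing $T^{mk-i}(y)[y]$ over $y\neq 0$ then reassembles a basis vector $f$, while the $y=0$ term vanishes exactly when $T^{mk-i}$ is nontrivial on $\K$, which is the case under the hypothesis $k\nmid i$. After reindexing $m\mapsto \ell-m$ and applying $J(T^{a},T^{b})=J(T^{b},T^{a})$, one obtains the formula of part~(1), and $\mu_{L}=kI-\mu_{A}$ with $k+\tfrac{1}{\ell}=\tfrac{q}{\ell}$ yields the stated expression.

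For part~(2), the same procedure applied to $f_{jk}$ has one additional feature: when $m=j$, the character $T^{(m-j)k}$ becomes trivial on $\K$, so both the sum over $y\neq 0$ and the $y=0$ boundary term contribute. Using the classical identity $J(T^{jk},T^{-jk})=-T^{jk}(-1)$ together with the observation that $T^{jk}(-1)=1$ (automatic when $p=2$, and a consequence of $k$ being even when $p$ is odd, since $p^{(\ell-1)t}\equiv 1\pmod{8}$ whenever $(\ell-1)t$ is even and $\ell$ is coprime to $8$), the $m=j$ piece collapses to $\tfrac{1}{\ell}(q[0]-\mathbf{1})$. The remaining indices $m\in\{1,\ldots,\ell-1\}\setminus\{j\}$ furnish the sum over $m\neq -j,0$ in the stated formula, after the same reindexing and Jacobi-sum symmetry used in part~(1). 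The main obstacle throughout is bookkeeping: consistently handling the conventions for $T^{a}(0)$, tracking signs in the $y=0$ boundary contributions, and aligning the reindexing $m\mapsto \ell-m$ with Jacobi-sum symmetry so that the output matches the form stated in the lemma.
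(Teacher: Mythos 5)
Your proposal is correct and follows essentially the same route as the paper: expand the indicator of $S$ as an average of the multiplicative characters $T^{mk}$, convert the resulting double sums into Jacobi sums, track the $[0]$ boundary terms, and handle the exceptional index in part (2) via $J(\chi,\chi^{-1})=-\chi(-1)$ together with $T^{jk}(-1)=1$. The only differences are cosmetic: you expand the indicator only on $\K$ (so no $\delta_{0}$ correction is needed, whereas the paper subtracts $\delta_{0}$), and you justify $T^{jk}(-1)=1$ by the parity of $k$ rather than by the paper's observation that $-1\in S$; these are equivalent.
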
 
\begin{proof}
For $x \in K$, we have $\mu_{A}[x]=\sum\limits_{y \in S}[x+y]$.

Let $\delta_{S}$ denote the characteristic function of $S$, treated as a subset of $K$. We now have $\mu_{A}[x]= \sum\limits_{z \in K}\delta_{S}(z-x)[z]$. Writing  $\delta_{S}$ as a linear combination of characters of $S$, we have $\delta_{S} =\dfrac{1}{\ell} \left(\sum \limits_{m=0}^{\ell-1} T^{mk}-\delta_{0}\right)$. Here $\delta_{0}$ is $1$ at $0$ and $0$ elsewhere.

We have 
\begin{align}\label{1}
\ell \mu_{A}(f_{i})&= \ell \mu_{A}\left(\sum\limits_{x \in \K } T^{-i}(x)[x]\right) \notag \\
&= \ell \sum\limits_{x \in \K} T^{-i}(x) \sum\limits_{z \in K} \delta_{S} (z-x)[z] \notag\\
        &= \sum\limits_{x\in \K} T^{-i}(x) \sum\limits_{z \in K} T^{0}(z-x)[z] +\sum \limits_{m=1}^{\ell-1} \sum\limits_{x\in \K} T^{-i}(x) \sum\limits_{z \in K} T^{mk}(z-x)[z]- \sum\limits_{x\in \K} T^{-i}(x) \sum\limits_{z \in \K} \delta_{0}(z-x)[z]. 
\end{align}

From definition of $f_{i}$ and $\delta_{0}$, we have $\sum\limits_{x\in \K} T^{-i}(x) \sum\limits_{z \in \K} \delta_{0}(z-x)[z]=f_{i}$.

We recall from character theory that for a character $\chi$ of $\K$,
\begin{align}\label{s}
\sum_{x \in \K}\chi(x)&= \begin{cases}
q-1 \ \text{if $\chi$ is trivial, and}\\
0\ \text{otherwise.}
\end{cases}
\end{align}
Using \eqref{s}, we see that
\begin{equation}\label{2}
\sum\limits_{x\in \K} T^{-i}(x) \sum\limits_{z \in \K} T^{0}(z-x)[z]= \left(\sum\limits_{x \in \K}T^{-i}(x)\right)\left(\sum\limits_{z \in K}[z]\right)=0
\end{equation}

We now turn our attention to $\sum\limits_{x\in \K} T^{-i}(x) \sum\limits_{z \in K} T^{mk}(z-x)[z]$, with $1 \leq m \leq \ell-1$. We have 
\begin{equation} \label{3*}
\sum\limits_{x \in \K} T^{-i}(x) \sum\limits_{z \in K} T^{mk}(z-x)[z]  = \sum\limits_{(x,z)\in \K \times \K} T^{-i}(x)T^{mk}(z-x)[z]+ \sum\limits_{x\in \K} T^{-i}(x)T^{mk}(-x)[0]
\end{equation}

For $z \in \K$, we have $T^{-i}(x)T^{mk}(z-x)=T^{i}(z^{-1})T^{mk}(z)T^{-i}\left((x/z)\right) T^{mk}\left(1- (x/z)\right)$. Now, we have
\begin{align*}
\sum\limits_{(x,z)\in \K \times \K} T^{-i}(x)T^{mk}(z-x)[z] &= \sum\limits_{(x,z)\in \K \times \K} T^{i}(z^{-1})T^{mk}(z)T^{-i}\left((x/z)\right) T^{mk}\left(1- (x/z)\right)[z]\\
&= \sum\limits_{(y,z)\in \K \times \K} T^{i-mk}(z^{-1})T^{-i}(y)T^{mk}(1- y)[z]\\
&= \left(\sum\limits_{y \in \K}T^{-i}(y)T^{mk}(1- y)\right) \times \left(\sum\limits_{z\in \K}T^{i-mk}(z^{-1})[z] \right)\\
&= J(T^{-i},T^{mk})f_{i-mk}.
\end{align*}
Using the above equality along with \eqref{1}, \eqref{2}, \eqref{3*}, we have
\begin{equation}\label{last1}
\ell \mu_{A}(f_{i})= \sum\limits_{m=0}^{\ell-1} J(T^{-i},T^{mk})f_{i-mk} + \sum\limits_{m=1}^{\ell-1}\sum\limits_{x\in \K} T^{-i}(x)T^{mk}(-x)[0] -f_{i}
\end{equation}
As $-1 \in S$ and $[\K:S]=k$, we have $T^{mk}(-x)=T^{mk}(x)$. Thus the middle sum above is 
$\left(\sum\limits_{m=1}^{\ell-1}\sum\limits_{x \in \K}T^{i-mk}(x)\right)[0]$. Using this along with \eqref{s} in \eqref{last1}, we conclude that
\begin{enumerate}[label=\alph*)]
\item if $k \nmid i$, we have $\ell \mu_{A}(f_{i}) =\sum\limits_{m=1}^{\ell-1} J(T^{-i},T^{-mk})f_{i+mk} -f_{i} $, and; 
\item for $1 \leq j \leq \ell-1$, we have $\ell \mu_{A}(f_{jk})=\sum\limits_{m=1}^{\ell-1} J(T^{-jk},T^{mk})f_{jk-mk} +(q-1)[0] -f_{jk}$.
\end{enumerate}
Using $L=kI-A$ now readily yields (1). From the general theory of Jacobi sums, we have for any character $\lambda$, $J(\lambda, \lambda ^{-1})= -\lambda(-1)$. Since $-1 \in S$, we have $T^{jk}(-1)=1$, and therefore we have $J(T^{-mk},T^{mk})=-1$. Thus $\ell \mu_{A}(f_{jk})=\sum\limits_{m=1}^{\ell-1} J(T^{-jk},T^{mk})f_{jk-mk} +(q-1)[0] -f_{jk}= \sum\limits_{m \neq -j,0}J(T^{-jk},T^{mk})f_{jk-mk} -\mathbf{1}+q[0]$. Now the (2) follows by using $L=KI-A$. 

The proof of the remaining statements is straight forward. 
\end{proof}

We observed in \S\ref{lin} that $L$ is diagonalizable and thus so are $L_{i}$'s.
We recall from \S3 that the eigenvalues of $L$ are  $0$, $u$ and $v$, with multiplicities $1$, $k$ and $(\ell-1)k$ (same as $q-k-1$), respectively. Again from \S\ref{lin} we know that the nullity of $L$ is $1$.
Now since the nullity of $L_{0}$ is $1$ (c.f Lemma \ref{im}), all other $L_{i}$'s are invertible. It follows that for $i \neq 0$, the characteristic polynomial of $L_{i}$ is a polynomial of the form $(x-u)^{a}(x-v)^{b}$ with $a+b=\ell$. By Lemma \ref{im} and diagonalizability of $L_{i}$, we have $q=tr(L_{i})=au+bv$. It now follows that $a=1$ and $b=\ell-1$. By similar arguments, we may show that the eigenvalues of $L_{0}$ are $0$, $u$ and $v$ with geometric multiplicities $1$, $1$ and $\ell-1$, respectively. We have proved the following Lemma.
\begin{lemma}\label{eig}
\begin{enumerate}
\item For $i\neq 0$, the eigenvalues of $L_{i}$ are $u$ and $v$ with geometric multiplicities $1$ and $\ell-1$, respectively.
\item The eigenvalues of $L_{0}$ are $0$, $u$ and $v$ with geometric multiplicities $1$, $1$ and $\ell-1$, respectively.
\end{enumerate}
\end{lemma}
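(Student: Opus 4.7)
The plan is to combine three inputs that are already established: the block decomposition of Lemma \ref{blockd}, the global spectral information from \S\ref{lin} (the minimal polynomial of $L$ is $x(x-u)(x-v)$, so $L$ is diagonalizable with spectrum $\{0,u,v\}$ and multiplicities $1,k,(\ell-1)k$), and the explicit images under $\mu_L$ in Lemma \ref{im}. Since $L$ is similar to $\mathrm{diag}(L_0,L_1,\ldots,L_{k-1})$ and similarity preserves diagonalizability, each $L_i$ is itself diagonalizable, and its eigenvalues form a subset of $\{0,u,v\}$.

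Next I would locate the kernel. By Lemma \ref{im}(4) we have $\mu_L(\mathbf{1})=0$ and $\mathbf{1}\in N_0$, so $\dim\ker L_0\ge 1$. Since the total nullity of $L$ is exactly $1$, we conclude that $\dim\ker L_0=1$ and that $L_i$ is invertible for every $i\neq 0$. Consequently, for $i\neq 0$ the eigenvalues of $L_i$ lie in $\{u,v\}$ with multiplicities $a,b$ satisfying $a+b=\ell$, while the multiplicities of $0,u,v$ in $L_0$ are $1,a,b$ with $a+b=\ell$.

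The remaining step is a trace computation read off Lemma \ref{im}. For $i\neq 0$ each basis vector $f_{i+mk}$ contributes $q/\ell$ to the diagonal of $L_i$, giving $\mathrm{tr}(L_i)=q$; for $i=0$ the vector $\mathbf{1}$ contributes $0$ while each of $[0],f_k,\ldots,f_{(\ell-1)k}$ contributes $q/\ell$, so once again $\mathrm{tr}(L_0)=q$. This yields in both cases the linear system $au+bv=q$, $a+b=\ell$. From the definitions in \S\ref{lin} one has $u-v=(-1)^t\sqrt{q}$ and $\ell v=q+(-1)^{t+1}\sqrt{q}$, hence $a(u-v)=q-\ell v=(-1)^t\sqrt{q}$, forcing $a=1$ and $b=\ell-1$. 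This gives both statements of the lemma.

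There is no substantive obstacle here: once Lemmas \ref{blockd} and \ref{im} are in place, the argument is pure bookkeeping. The only small point requiring care is the size discrepancy between the basis of $N_0$ (which has $\ell+1$ vectors) and those of the other $N_i$ (which have $\ell$), together with the fact that $-1\in S$ makes each $J(T^{-mk},T^{mk})=-1$, which is what guarantees that the diagonal entries coming from Lemma \ref{im}(2) are really all $q/\ell$.
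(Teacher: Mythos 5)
Your proposal is correct and follows essentially the same route as the paper: diagonalizability of each $L_i$ inherited from $L$ via the block decomposition, localization of the kernel in $N_0$ using $\mu_L(\mathbf{1})=0$ and the nullity of $L$, and the trace computation $\mathrm{tr}(L_i)=q=au+bv$ with $a+b=\ell$ forcing $a=1$, $b=\ell-1$. Your explicit treatment of the $L_0$ block (including the role of $J(T^{-mk},T^{mk})=-1$) simply spells out what the paper dispatches with ``by similar arguments.''
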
  
\section{The Sylow $p$-subgroup of the critical group of $G(p,\ell,t)$}\label{syl}
By Lemma \ref{blockd}, it is clear that finding the elementary divisors of $R$-matrices $L_{i}$'s will determine the $p$-elementary divisors of the critical group. 
As $\ell$ is a unit in $R$, the Smith normal form of $L_{i}$ is the same as that of $\ell L_{i}$. Lemma \ref{im} shows that the any entry of $\ell L_{i}$ is either $q$ or is a Jacobi sum of the form $J(T^{-(i+mk)},T^{-nk})$, where $0\leq m \leq \ell-1$ and $0 < n \leq \ell-1$. In the context of Lemma \ref{minor}, it is worth investigating the $p$-adic valuations of Jacobi sums.  

An integer $a$ not divisible by $q-1$ has, when reduced modulo $q-1$, a unique $p$-digit expansion $a \equiv a_{0}+a_{1}p +\ldots+a_{(\ell-1)t-1}p^{(\ell-1)t-1} \pmod{q-1}$, where $0 \leq a_{i}\leq p-1$. We represent this expansion by the tuple of digits $(a_{0},\ldots, a_{i}, \ldots,a_{(\ell-1)t-1})$.
By $s(a)$ we denote the sum $\sum a_{i}$. For example, $1$ has the expansion $(1, \ldots, 0 ,\ldots 0)$ and $s(1)=1$.

Applying Stickelberger's theorem on Gauss Sums \cite{Stick} and the well know relation between Gauss and Jacobi sums we can deduce the following theorem.
\begin{theorem}\label{stick}
Let $q$ be a power of a prime $p$ and let $a$ and $b$ be integers not divisible by $q-1$. If $a+b \nequiv 0 \pmod{q-1}$, then we have 
$$v_{p}(J(T^{-a},T^{-b}))= \frac{s(a)+s(b)-s(a+b)}{p-1}.$$  
In other words, the $p$-adic valuation of $J(T^{-a},T^{-b})$ is equal to the number of carries, when adding $p$-expansions of $a$ and $b$ modulo $q-1$.
\end{theorem}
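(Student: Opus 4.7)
The plan is to deduce the formula by combining Stickelberger's theorem on the prime factorization of Gauss sums with the classical identity expressing Jacobi sums as a quotient of Gauss sums. I would begin by fixing a primitive $p$-th root of unity $\zeta$ in the algebraic closure of $\mathbb{Q}_{p}$ and, for every multiplicative character $\chi$ of $\K$, forming the Gauss sum $g(\chi) := \sum_{x \in \K} \chi(x)\zeta^{\mathrm{Tr}(x)}$. In the totally ramified extension $R[\zeta]/R$, which has ramification index $p-1$, Stickelberger's theorem (\cite{Stick}) asserts that the $\mathfrak{P}$-adic valuation of $g(T^{-a})$ equals $s(a)$, where $\mathfrak{P}$ is the maximal ideal of $R[\zeta]$. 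Dividing by the ramification index yields the intermediate identity $v_{p}(g(T^{-a})) = s(a)/(p-1)$.

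The second step uses the Gauss--Jacobi relation: for any characters $\chi, \psi$ of $\K$ with $\chi\psi$ non-trivial, $J(\chi,\psi) = g(\chi)g(\psi)/g(\chi\psi)$. The hypothesis $a+b \not\equiv 0 \pmod{q-1}$ ensures that $T^{-a-b}$ is non-trivial, so applying the relation with $\chi = T^{-a}$ and $\psi = T^{-b}$ and then taking $p$-adic valuations yields
$$v_{p}(J(T^{-a},T^{-b})) = v_{p}(g(T^{-a})) + v_{p}(g(T^{-b})) - v_{p}(g(T^{-a-b})) = \frac{s(a)+s(b)-s(a+b)}{p-1},$$
which is the first assertion of the theorem.

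For the second sentence of the statement I would verify the elementary digit-sum identity that $s(a)+s(b)-s(a+b)$ equals $(p-1)$ times the number of carries in the base-$p$ addition of $a$ and $b$ modulo $q-1$: each carry converts a column contribution $d+p$ (with $0 \leq d \leq p-1$) into a digit $d$ together with a $+1$ added to the next column, a net change of $-(p-1)$ in the digit sum. The cyclic wrap-around carry that appears when $a+b \geq q-1$ is absorbed consistently via the decomposition $q-1 = (p-1)\sum_{i=0}^{(\ell-1)t-1} p^{i}$, which guarantees that reducing modulo $q-1$ introduces exactly one extra carry of the expected weight. Since Stickelberger's theorem is invoked as a black box and the Gauss--Jacobi identity is standard, the only mildly subtle step is the bookkeeping for this wrap-around carry, and it is routine.
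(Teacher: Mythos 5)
Your proposal is correct and follows exactly the route the paper takes: the paper deduces the theorem by citing Stickelberger's theorem for the $p$-adic valuation of Gauss sums together with the standard identity $J(\chi,\psi)=g(\chi)g(\psi)/g(\chi\psi)$ (valid here since $T^{-a}$, $T^{-b}$, $T^{-a-b}$ are all non-trivial), and your valuation computation and carry-counting bookkeeping simply supply the details the paper leaves implicit.
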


Given $b \in \bb{Z}$, by $[b]$ denote the unique positive integer less than $\ell$ satisfying $b \equiv [b] \pmod{\ell}$. We can now see that
\begin{align*}
k &=\frac{q-1}{\ell}\\
&= \frac{p^{\ell-1}-1}{\ell} \times \frac{p^{(\ell-1)t}-1}{p^{\ell-1}-1}\\
&= \sum\limits_{i=0}^{\ell-2} \left(\frac{[p^{i}]p-[p^{i+1}]}{\ell}\right)p^{\ell-2-i} \times \sum\limits_{i=0}^{t-1} p^{(\ell-1)i}.
\end{align*}
Thus in the notation we adopted, the tuple for $k$ is the tuple in which the string $$\left(\dfrac{[p^{\ell-2}]p-1}{\ell}, \ldots, \dfrac{[p^{i}]p-[p^{i+1}]}{\ell}, \ldots ,\dfrac{p-[p]}{\ell}\right)$$ repeats $t$ times. As $p$ is primitive modulo $\ell$, we have $\{[p^{i}]| 0 \leq i \leq \ell-2\}=\{1,2, \ldots \ell-1\}$. We can now conclude that $s(k)= t\sum\limits_{i=0}^{\ell-2} \left(\frac{[p^{i}]p-[p^{i+1}]}{\ell}\right)=\frac{(\ell-1)t}{2}(p-1)$. 

Now for $0 \leq i,j \leq \ell-1$, we can find $r_{i,j} \in \bb{Z}$ such that $[p^{i}][p^{j}]=[p^{i+j}]+r_{i,j}\ell$. Given any $m \in \{1,2, \ldots, \ell-1\}$, as $p$ is primitive $\pmod{\ell}$, there is a positive integer $j$ such that $[p^{j}]=m$. We have
$mk=\left([p^{j}]\frac{p^{\ell-1}-1}{\ell}\right) \times \frac{p^{(\ell-1)t}-1}{p^{\ell-1}-1}$. Now we have
\begin{align*}
\left([p^{j}]\frac{p^{\ell-1}-1}{\ell}\right) &= \sum\limits_{i=0}^{\ell-2} [p^{j}]\left(\frac{[p^{i}]p-[p^{i+1}]}{\ell}\right)p^{\ell-2-i}\\
&= \sum\limits_{i=0}^{\ell-2} \left(\frac{[p^{i+j}]p+pr_{i,j}\ell-[p^{i+1+j}]-r_{i+1,j}\ell}{\ell}\right)p^{\ell-2-i}\\
&= \sum\limits_{i=0}^{\ell-2} \left(\frac{[p^{i+j}]p-[p^{i+1+j}]}{\ell}\right)p^{\ell-2-i}+\sum \limits_{i=0}^{\ell-2} r_{i,j}p^{\ell-1-i} - \sum\limits_{i=0}^{\ell-2} r_{i+1,j}p^{\ell-2-i}\\
&= \sum\limits_{i=0}^{\ell-2} \left(\frac{[p^{i+j}]p-[p^{i+1+j}]}{\ell}\right)p^{\ell-2-i} +r_{0,j}p^{\ell-1}-r_{\ell-1,j}.
\end{align*} 
As $r_{0,j}=r_{\ell-1,j}=0$, from the above computation we observe that
\begin{equation*}
\left([p^{j}]\frac{p^{\ell-1}-1}{\ell}\right) \times \frac{p^{(\ell-1)t}-1}{p^{\ell}-1} = 
\sum\limits_{i=0}^{\ell-2} \left(\frac{[p^{i+j}]p-[p^{i+1+j}]}{\ell}\right)p^{\ell-2-i} \times \sum\limits_{i=0}^{t-1} p^{(\ell-1)i} 
\end{equation*}
Thus in the notation we adopted, the tuple for $[p^{j}]k$ is the tuple in which the string $$\left(\dfrac{[p^{\ell+j-2}]p-1}{\ell}, \ldots, \dfrac{[p^{i+j}]p-[p^{i+j+1}]}{\ell}, \ldots ,\dfrac{[p^{j}]p-[p^{j+1}]}{\ell}\right)$$ repeats $t$ times. So the digits of $mk=[p^{j}]k$ can be obtained by permuting the digits of $k$, and thus $s(mk)=s(k)=\frac{(\ell-1)t}{2}(p-1)$.

Given $a,b$ as described in the theorem above, by $c(a,b)$ we denote $v_{p}(J(T^{-a},T^{-b}))$. Then by Lemma \ref{im} the off-diagonal entries of $L_{i}$ (with $i \neq 0$) are $u_{mn}p^{c(i+mk,nk)}$ for some units $u_{mn}$ of $R$, and the diagonal entries are all $q/\ell$. 
Lemma \ref{eig} shows that $L_{i}$ satisfies $(x-u)(x-v)=0$. We make use of this to arrive at the following lemma.

\begin{lemma}\label{palin}
Given $j < \frac{(\ell-1)t}{2}$ and $0<i\leq k-1$, the multiplicity of $p^{j}$ as an elementary divisor of $L_{i}$ is the same as that of $p^{v_{p}(uv)-j}$. 
\end{lemma}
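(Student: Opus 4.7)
The plan is to combine two ingredients: an algebraic identity that forces a ``reversal'' of invariant factors, and a high-$p$-power congruence that pins down the tails of the elementary-divisor multiset. Set $M:=(u+v)I-L_i$. By Lemma~\ref{eig}, $L_i$ has minimal polynomial $(x-u)(x-v)$, so $L_i^2=(u+v)L_i-uvI$ and hence $L_iM=ML_i=uv\cdot I$.

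A standard fact over the local PID $R$ says that if $A,B\in M_n(R)$ satisfy $AB=BA=cI$ with $c\neq 0$, and the invariant factors of $A$ are $\alpha_1\mid\alpha_2\mid\cdots\mid\alpha_n$, then the invariant factors of $B$ are $c/\alpha_n\mid c/\alpha_{n-1}\mid\cdots\mid c/\alpha_1$. (Indeed, writing $A=P\,\mathrm{diag}(\alpha_i)\,Q$ with $P,Q\in\mathrm{GL}_n(R)$ gives $B=cA^{-1}=Q^{-1}\,\mathrm{diag}(c/\alpha_i)\,P^{-1}$, and reversing the diagonal order yields Smith normal form.) Applied to $A=L_i$, $B=M$, $c=uv$, this immediately yields $e_j(M)=e_{v_p(uv)-j}(L_i)$ for every $j$.

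The crux is then the congruence $L_i\equiv -M\pmod{p^{(\ell-1)t/2}}$. Since $L_i+M=(u+v)I=(2v+(-1)^t\sqrt{q})I$ and both $v_p(v)$ and $v_p(\sqrt{q})$ equal $(\ell-1)t/2$, we have $v_p(u+v)\geq (\ell-1)t/2$, so the congruence holds. Matrices congruent mod $p^s$ define the same $(R/p^sR)$-linear map up to a sign, so their cokernels reduced mod $p^s$ are isomorphic; by the structure theorem, the cokernel of $L_i$ reduced mod $p^s$ is $\bigoplus_i R/p^{\min(v_p(\alpha_i(L_i)),\,s)}R$, and similarly for $M$. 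Setting $s=(\ell-1)t/2$ and extracting the strict-tail contributions gives the multiset identity
\[
\{v_p(\alpha_i(L_i)):v_p(\alpha_i(L_i))<(\ell-1)t/2\}=\{v_p(\alpha_i(M)):v_p(\alpha_i(M))<(\ell-1)t/2\}.
\]
Combining with the reversal formula $e_j(M)=e_{v_p(uv)-j}(L_i)$ yields $e_j(L_i)=e_j(M)=e_{v_p(uv)-j}(L_i)$ for every $j<(\ell-1)t/2$, which is the claim. The main obstacle is really just the identification of the appropriate companion matrix $M$ together with the verification that $v_p(u+v)\geq v_p(v)$; once these are in hand, the palindromic identity is forced by combining the invariant-factor reversal coming from $L_iM=uvI$ with the mod-$p^{(\ell-1)t/2}$ equivalence of $L_i$ and $-M$.
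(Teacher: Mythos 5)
Your proof is correct and follows essentially the same route as the paper: use the relation $L_i\bigl(L_i-(u+v)I\bigr)=-uvI$ (you work with the negative companion $(u+v)I-L_i$, an immaterial sign change) to reverse the invariant factors, then use the congruence of $L_i$ with its companion modulo $p^{(\ell-1)t/2}$ to match the elementary divisors below that threshold. Your write-up even fills in the two small points the paper leaves implicit, namely the verification that $v_p(u+v)\geq(\ell-1)t/2$ and the truncated-cokernel argument showing that congruence mod $p^s$ forces equal multiplicities $e_j$ for $j<s$.
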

\begin{proof}
As $L_{i}$ satisfies $(x-u)(x-v)=0$, we have $(L_{i})(L_{i}-(v+u)I)=vuI$. Let $P$ and $Q$ be $R$-matrices such that $PL_{i}Q$ is the Smith normal form of $L_{i}$. Now consider 
$PL_{i}QQ^{-1}(L_{i}-(v+u)I)P^{-1}=vuI$. This shows that the multiplicity of $p^{v_{p}(uv)-j}$ as an elementary divisor of $L_{i}$ is the same as the multiplicity of $p^{j}$ as an elementary divisor of $L_{i}-(v+u)I$. Since $L_{i}$ and $L_{i}-(u+v)I$ are congruent modulo $p^{v_{p}(v)}=p^{(\ell-1)t/2}$, for $0 \leq j <(\ell-1)t/2$, the multiplicity of $p^{j}$ as an elementary divisor of $L_{i}-(v+u)I$ is the same as the multiplicity of $p^{j}$ as an elementary divisor of $L_{i}$.   
\end{proof}
We now compute the Smith normal forms of $L_{i}$'s.
\begin{lemma}\label{snfl}
\begin{enumerate}[label=(\roman*)]
\item  For $0<i \leq \ell-1$, the Smith normal form of $L_{i}$ over $R$ is the diagonal matrix

 $\mrm{diag}(p^{\mathfrak{min}(i)},\underbrace{p^{(\ell-1)t/2},\ldots,p^{(\ell-1)t/2}}_{(\ell-2) \ \text{repetitions}},p^{v_{p}(uv)-\mathfrak{min}(i)})$.
 
 Here $\mathfrak{min}(i)=\mathrm{min}\left(\{c(i+mk,nk)| 0 \leq m \leq \ell-1\ \text{and} \ 0 < n \leq \ell-1\}\right)$.
\item The Smith normal form of $L_{0}$ over $R$ is 
$\mrm{diag}(1,1,\underbrace{p^{(\ell-1)t/2}}_{\ell-3\ \text{times}},p^{v_{p}(u)},0)$. 
\end{enumerate}
\end{lemma}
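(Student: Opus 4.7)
The plan is to apply Lemma \ref{main} to each $L_i$ separately; this reduces the SNF computation to (i) lower bounds on $\dim \overline{M_j(L_i)}$ for a few critical values of $j$, and (ii) the exact value of $\kappa(L_i)$.

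For part (i) with $i \ne 0$, I apply Lemma \ref{main} with $t_1 = \mathfrak{min}(i)$, $s_1 = \ell$; $t_2 = (\ell-1)t/2$, $s_2 = \ell-1$; $t_3 = v_p(uv) - \mathfrak{min}(i)$, $s_3 = 1$; and $s_4 = \dim \overline{\ker L_i} = 0$, using that $L_i$ is invertible with $v$-eigenspace of multiplicity $\ell - 1$ by Lemma \ref{eig}. Lemma \ref{im} together with Theorem \ref{stick} shows that every entry of $\ell L_i$ has $p$-adic valuation at least $\mathfrak{min}(i)$ (the diagonal entry $q$ trivially, the off-diagonal Jacobi sums by definition of $\mathfrak{min}$), so $\ell L_i \equiv 0 \pmod{p^{\mathfrak{min}(i)}}$ and $\dim \overline{M_{t_1}(L_i)} = \ell$. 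Lemma \ref{eigenval} applied to the $v$-eigenspace gives $\dim \overline{M_{t_2}(L_i)} \geq \ell - 1$. Lemma \ref{minor} yields $d_1(L_i) = p^{\mathfrak{min}(i)}$, fixing the smallest invariant factor; Lemma \ref{palin} then produces a paired invariant factor at level $t_3$, so $\dim \overline{M_{t_3}(L_i)} \geq 1$. Finally $\kappa(L_i) = v_p(\det L_i) = v_p(uv^{\ell-1}) = \ell(\ell-1)t/2 + d$, which a short check confirms equals $\sum (s_i - s_{i+1}) t_i$. Lemma \ref{main} then delivers the claimed diagonal form. The degenerate case $\mathfrak{min}(i) = (\ell-1)t/2$ is handled by merging $t_1$ with $t_2$ and adjusting the $s_i$'s accordingly.

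For part (ii), I apply Lemma \ref{main} with $t_1 = (\ell-1)t/2$, $s_1 = \ell - 1$; $t_2 = v_p(u)$, $s_2 = 2$; and $s_3 = \dim \overline{\ker L_0} = 1$, noting that the primitive vector $\mathbf{1}$ generates the rank-one kernel. The bound at $t_1$ is the same eigenvalue argument as in part (i). To establish $e_0(L_0) = 2$, I reduce $L_0$ modulo $p$: the digit-sum computation already carried out in the paper shows that every Jacobi sum $J(T^{-ak}, T^{-bk})$ appearing as an entry of $L_0$ has valuation $(\ell-1)t/2 \geq 1$, so $\ell L_0 \bmod p$ is supported in the $\mathbf{1}$-row and the $[0]$-column, and a one-line row-reduction gives rank $2$; the unit $2 \times 2$ minor $1 - q$ coming from rows $1, 3$ and columns $2, 3$ confirms $d_2(L_0) = 1$. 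The exact value of $\kappa(L_0)$ is obtained indirectly from Kirchhoff's Matrix-Tree Theorem (\S\ref{lin}): one has $\kappa(L) = v_p(u^k v^{(\ell-1)k}/q)$, and subtracting the $k - 1$ identical contributions $\kappa(L_i) = \ell(\ell-1)t/2 + d$ coming from part (i) leaves $\kappa(L_0) = (\ell-2)(\ell-1)t/2 + d$, matching $\sum (s_i - s_{i+1}) t_i$.

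The main obstacle is verifying $\dim \overline{M_{v_p(u)}(L_0)} \geq 2$ in the arithmetically exceptional case $p \mid \ell - 1$ (so $d > 0$ and $v_p(u) > v_p(v)$). Lemma \ref{eigenval} applied to the $u$-eigenspace gives only dimension $1$, and in small examples the primitive $u$-eigenvector reduces modulo $p$ to the same line as $\mathbf{1}$, so a naive direct-sum argument collapses. A second independent direction must be exhibited by hand; I expect this to come from lifting a suitable element of $\ker(L_0 \bmod p^{v_p(u)})$ that is not an eigenvector, constructed by combining the identity $L_0(L_0 - (u+v)I) = uv\, I|_{N_0} + \mu J|_{N_0}$ (equation \eqref{sreq} restricted to $N_0$) with the strong congruence of $\ell L_0$ modulo $p^{(\ell-1)t/2}$. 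When $p \nmid \ell - 1$ one has $v_p(u) = v_p(v)$, the two levels $t_1, t_2$ coincide, and this refinement is unnecessary.
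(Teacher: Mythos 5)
Your part (i) and the overall frame of part (ii) are essentially the paper's own argument: the same application of Lemma \ref{main} with the same levels $t_{i}$ and bounds $s_{i}$, the same sources for the hypotheses (all entries of $\ell L_{i}$ divisible by $p^{\mathfrak{min}(i)}$, the $v$-eigenspace via Lemmas \ref{eig} and \ref{eigenval}, the pairing from Lemma \ref{palin}), and the same computation of $\kappa(L_{i})$ and of $\kappa(L_{0})$ by subtracting the blocks from the Kirchhoff total. One small imprecision in (i): when $\mathfrak{min}(i)=(\ell-1)t/2$ and $p\mid \ell-1$, Lemma \ref{palin} no longer supplies the bound at the top level $v_{p}(uv)-\mathfrak{min}(i)=v_{p}(u)$; the paper instead applies Lemma \ref{eigenval} to the $u$-eigenvector (geometric multiplicity one) and runs Lemma \ref{main} with two levels. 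Your ``merging'' remark should be made precise in this way, but that is a cosmetic repair using tools you already invoke.

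The genuine gap is exactly the step you flag and leave as an expectation: when $p\mid\ell-1$ you never prove $\dim(\ov{M_{v_{p}(u)}(L_{0})})\geq 2$, and this is the only input that rules out alternative distributions of the same $\kappa(L_{0})$ (e.g.\ exponents $(\ell-1)t/2+d'$ and $v_{p}(u)-d'$ in place of $(\ell-1)t/2$ and $v_{p}(u)$), so without it part (ii) is not established precisely in the case where its two middle exponents actually differ. The paper closes this by a different mechanism than the one you speculate about: it does not lift a non-eigenvector from $\ker(L_{0}\bmod p^{v_{p}(u)})$, but works with the integral image of $L_{0}$. By Lemma \ref{im} and Theorem \ref{stick} every off-diagonal Jacobi-sum entry of $\ell L_{0}$ has valuation exactly $(\ell-1)t/2>0$, so reducing Lemma \ref{im} modulo $p$ shows $\ov{\Ima(L_{0})}$ is two-dimensional, spanned by $\ov{\mathbf{1}}$ and $\ov{\sum_{j\neq 0}f_{jk}+\mathbf{1}}$; and since $LJ=0$, equation \eqref{sreq} gives the relation $L(L-(u+v)I)=uvI$ on $\Ima(L)$, from which (using also $u+v\equiv 0 \pmod p$) the paper deduces $\ov{\Ima(L_{0})}\subset \ov{M_{v_{p}(uv)}(L_{0})}\subset \ov{M_{v_{p}(u)}(L_{0})}$. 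These two facts supply the needed two independent directions at level $v_{p}(u)$. Your instinct that \eqref{sreq} combined with the strong congruences on $\ell L_{0}$ is the engine is correct, but until this argument (or an equivalent one) is actually carried out, your proof of part (ii) is incomplete in the arithmetically exceptional case $p\mid\ell-1$.
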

\begin{proof}
Given an $R$-matrix $X$, by $\kappa(X)$, we denote the $p$-adic valuation of the product of a complete set of non-zero invariant factors of $X$, counted with multiplicities. By the notation in \S\ref{Smith normal form}, $e_{i}(X)$ denotes the multiplicity of $p^{i}$ as an elementary divisor of $X$. Following the notation in \S\ref{Smith normal form}, we consider the vector spaces $\ov{M_{y}(X)}$ with $y \in \bb{Z}_{\geq 0}$. We use Lemma \ref{main} to prove our results.

As a consequence of Kirchhoff's Matrix-Tree theorem, we have $\kappa(L)=v_{p}(|C|)=v_{p}\left(\frac{u^{k}v^{q-k-1}}{q}\right)=kv_{p}(u)+(\ell-1)kv_{p}(v)-v_{p}(q)$.
Lemma \ref{eig} implies that for $i\neq 0$, we have $\kappa(L_{i})=v_{p}(\mrm{det}(L_{i}))=v_{p}(u)+(\ell-1)v_{p}(v)$. Application of Lemma \ref{blockd} gives us $\kappa(L_{0})=\kappa(L)-\sum_{i\neq 0} \kappa(L_{i})=v_{p}(u)+(\ell-3)v_{p}(v)$.

(i) By Theorem \ref{stick}, we have $c(i+mk,nk)+c(i+(m+n)k, (\ell-n)k)= (\ell-1)t$. We can now conclude that $\mathfrak{min}(i) \leq (\ell-1)t/2$. Let $\mrm{diag}(\beta_{1}, \beta_{2}, \ldots \beta_{\ell})$ be the Smith normal form of $L_{i}$.
Then by Lemma \ref{im} and Lemma \ref{minor}, it follows that $\mathfrak{min}(i)=v_{p}(\beta_{1})$. By definition of $M_{\mathfrak{min}(i)}(L_{i})$ it follows that $M_{\mathfrak{min}(i)}(L_{i})=N_{i}$ and thus $\dim(\ov{M_{\mathfrak{min}(i)}(L_{i})})=\ell$. 
First we assume that $\mathfrak{min}(i)<(\ell-1)t/2$. In this case, by Lemma \ref{palin} we have $e_{v_{p}(uv)-\mathfrak{min}(i)}(L_{i})=e_{\mathfrak{min}(i)}(L_{i}) \geq 1$ and thus $\dim(\ov{M_{v_{p}(uv)-\mathfrak{min}(i)}(L_i)}) \geq 1$. 
Lemma \ref{eig} tell us that geometric multiplicity of $v$ as an eigenvalue of $L_{i}$ is $\ell-1$. Now Lemma \ref{eigenval} implies that $\dim(\ov{M_{(\ell-1)t/2}(L_{i})}) \geq \ell-1$. Applying \eqref{dim}, we have $\dim(\ov{M_{\mathfrak{min}(i)}(L_{i})})-\dim(\ov{M_{(\ell-1)t/2}(L_i)})=\sum\limits_{\mathfrak{min}(i)\leq j < (\ell-1)t/2}e_{j}(L_{i}) \geq 1$, and thus $\dim(\ov{M_{\mathfrak{min}(i)}(L_{i})})\geq \ell$. 
Therefore by Lemma \ref{main}, setting $j=3$, $s_{1}=\ell, s_{2}=\ell-1, s_{3}=1, s_{4}=\ov{\ker(L_{i})}=0$, $t_{1}=\mathfrak{min}(i), t_{2}=(\ell-1)t/2$, and $t_{3}=v_{p}(uv)-\mathfrak{min}(i)$, we have $e_{\mathfrak{min}(i)}(L_{i})=e_{v_{p}(uv)-\mathfrak{min}(i)}(L_{i})=1$, $e_{(\ell-1)t/2}(L_{i})=\ell-2$, and $e_{i}(L_{i})=0$ for all other $i$.
Now assume that $\mathfrak{min}(i)=(\ell-1)t/2$. Lemma \ref{eigenval} implies that $\dim(\ov{M_{v_{p}(u)}(L_{i})}) \geq 1$, since $u$ is an eigenvalue of multiplicity $1$. Therefore by Lemma \ref{main}, setting $j=2$, $s_{1}=\ell$, $s_{2}=1$, $s_{3}=\ov{\ker(L_{i})}$, $t_{1}=\mathfrak{min}(i)=(\ell-1)t/2$, and $t_{2}=v_{p}(u)= v_{p}(uv)-(\ell-1)t/2$, we have $e_{(\ell-1)t/2}(L_{i})=\ell-1$, $e_{v_{p}(uv)-\mathfrak{min}(i)}(L_{i})=1$, and $e_{i}(L_{i})=0$ for all other $i$. Thus we have (i).

(ii) By Lemma \ref{im} and Theorem \ref{stick}, there are units $v_{(mn)}$ in $R$ such that the matrix $\ell L_{0}$ is 
$$
\left[
\begin{array}{c c c c |c c}
q & v_{(12)}\sqrt{q} & \ldots & v_{(1\ \ell-1)}\sqrt{q} & -1 & 0 \\
\vdots & \ddots & \ldots & \vdots &\vdots & \vdots\\
v_{(\ell-1\ 1)}\sqrt{q}& \ldots & \ldots &q & -1 & 0 \\
\hline
-q & \ldots & \ldots & -q & q & 0 \\
1 & \ldots & \ldots & 1 & -1 & 0

\end{array}
\right].
$$ 

The determinant of the $2 \times 2$ minor $\left[ \begin{array}{c c}
q & -1 \\
1 & -1
\end{array}  \right]$ of $\ell L_{0}$ is a unit in $R$. Observe that any $3 \times 3$ minor of $\ell L_{0}$ has $p$-valuation of atleast $v_{p}(q)$. Now applying Lemma \ref{minor} yields that the multiplicity of $p^{0}=1$ as an elementary divisor of $L_{0}$ is $2$, that is $e_{0}(L_{0})=2$. Now Lemma \ref{eldivcal} implies that $\dim(\ov{M_{0}(L_{0})})-\dim(\ov{M_{1}(L_{0})})=2$, and thus we have $\dim(\ov{M_{1}(L_{0})})=\ell+1-2=\ell-1$. By Lemma \ref{eig} and Lemma \ref{eigenval}, we have $\dim(\ov{M_{v_{p}(v)}(L_{0})}) \geq \ell-1$. Since $\ov{M_{1}(L_{0})} \supset \ov{M_{v_{p}(v)}(L_{0})}$, we have $\dim(\ov{M_{v_{p}(v)}(L_{0})})=\ell-1$. Lemma \ref{im} implies that $\ov{\Ima(L_{0})}$ is generated by $\mathbf{1}$ and $\sum\limits_{j \neq 0} f_{jk}+\mathbf{1}$. Therefore $\dim(\ov{\Ima(L_{0})})=2$. As $LJ=0$, by \eqref{sreq} the restriction of $L$ to $\Ima(L)$ satisfies $L(L-v+uI)=vuI$. As $\Ima(L_{0}) \subset Im(L)$, we can conclude that $\ov{\Ima(L_{0})} \subset \ov{M_{v_{p}(uv)}(L_{0})} \subset \ov{M_{v_{p}(u)}(L_{0})}$. 

We have $\kappa(L_{0})=v_{p}(v)(\ell-1-2)+v_{p}(u)(2-1)=\frac{(\ell-1)t}{2}(\ell-1-2)+v_{p}(u)(2-1)$. Now application of Lemma \ref{main} yields (ii). 
\end{proof}

\subsection{Proof of Theorem \ref{m}}
Lemma \ref{blockd} shows the Laplacian matrix $L$ is similar over $R$ to the block diagonal matrix $\mrm{diag}(L_{0}, L_{1}, L_{2} \ldots L_{k-1})$. Results $(1)$, $(2)$, $(3)$, and $(6)$ now follow by applying Lemma \ref{snfl}. 
 
If $p \nmid \ell-1$, we have $d=v_{p}(\ell-1)=0$ and $v_{p}(u)= \frac{(\ell-1)t}{2}=v_{p}(v)=v_{p}(uv)-\frac{(\ell-1)t}{2}$. From \eqref{dim}, we have $q-1=\sum\limits_{j \neq \frac{(\ell-1)t}{2}}e_{j} +e_{\frac{(\ell-1)t}{2}}$. Now application of $(1)$ and $(3)$ yields $(4)$.

If $p\mid \ell-1$, then $v_{p}(u)>\frac{(\ell-1)t}{2}=v_{p}(v)$ and $v_{p}(u)=v_{p}(uv)-\frac{(\ell-1)t}{2}$. Now by Lemma \ref{blockd} and Lemma \ref{snfl}, we deduce that 
$e_{v_{p}(u)}=|\{i|\ 1\leq i\leq k-1\ \text{and}\ \mathfrak{min}(i)=\frac{(\ell-1)t}{2}\}|+1$. We may also deduce that $e_{\frac{(\ell-1)t}{2}}=(\ell-2)|\{i|\ 1\leq i\leq k-1\ \text{and}\ \mathfrak{min}(i)=\frac{(\ell-1)t}{2}\}|$ + $(\ell-3)|\{i|\ 1\leq i\leq k-1\ \text{and}\ c(i)<\frac{(\ell-1)t}{2}\}|$+ $\ell-3$ and thus that 
$e_{v_{p}(u)}-1+ (\ell-2)(k-1)+ (\ell-3)=e_{\frac{(\ell-1)t}{2}}$. From \eqref{dim}, we have $q-1= e_{v_{p}(u)}+e_{\frac{(\ell-1)t}{2}}+ \sum\limits_{j\notin\{\frac{(\ell-1)t}{2},v_{p}(u)\}}e_{j}$. Now application of $(1)$ and $(3)$ yield $(5)$. 

\section{The Critical group of $G(p,3,t)$}\label{3}
We now turn our focus to graphs of the form $G(p,3,t)$. We assume that $(p,t) \neq (2,1)$ and $p \equiv 2 \pmod{3}$, so these graphs are connected and strongly regular. 
Recall that this is the Cayley graph on the additive group of the field $K=\bb{F}_{q}$ ($q=p^{2t}$) with ``connection'' set $S$, where $S$ is the unique subgroup of $\K$ satisfying $k:=|S|=\frac{q-1}{3}$. 
All the results in the previous sections transfer to this case by setting $\ell=3$.

Lemma \ref{snfl} shows that for $i \neq 0$, the Smith normal form of $L_{i}$ over $R$ is $\mrm{diag}(p^{c},p^{t},p^{v_{p}(uv)-c})$. Here $c$ is the least among the $p$-adic valuations of the entries of $L_{i}$. In this case,  $v= \sqrt{q} \dfrac{\sqrt{q}+(-1)^{t+1}}{3}$ and $u= v+(-1)^{t}\sqrt{q}$ are the non-zero eigenvalues of the Laplacian of $G(p,3,t)$.

Given integers $a,b$ not divisible by $q-1$, let $c(a,b)$ denote the number of carries when adding the $p$-adic expansions of $a$ and $b$ $\pmod{q-1}$. Consider the following counting problem.

\paragraph{Counting Problem:} For $1\leq i \leq k-1$, by $\mathfrak{min}(i)$ we denote $\mrm{min}\left(\{c(i+mk,nk)| \ 0\leq m \leq 2, \ \text{and} \ n=1,2\}\right)$. Given $0 \leq a < t$, find $|\{i| \ \mathfrak{min}(i)=a\}|$.

Given a positive integer $a$, by $e_{a}$ we denote the multiplicity of $p^{a}$ as an elementary divisor of the critical group of $G(p,3,t)$. Let $e_{0}$ be the $p$-rank of the Laplacian of $G(p,3,t)$.  
Theorem \ref{m} implies that, for $0<a<t$, we have $e_{a}= |\{i| \ \mathfrak{min}(i)=a\}|$, and $e_{0}=|\{i| \ \mathfrak{min}(i)=0\}|+2$. 
Thus the solution to this problem will immediately provide us with the $p$-elementary divisors of the critical groups of graphs of the form $G(p,3,t)$. 

Every integer $a$ that is not divisible by $q-1$, when reduced modulo $q-1$, has a unique $p$-adic expansion $a\equiv \sum\limits_{m=0}^{2t-1} a_{m}p^{m} \pmod{q-1}$, where $0\leq a_{m} \leq p-1$. By $s(a)$, we denote $\sum a_{m}$. The $p$-adic expression for $k$ is $\sum\limits_{m=0}^{t-1}\left(\frac{p-2}{3}\right)p^{2m}+ \left(\frac{2p-1}{3}\right) p^{2m+1}$ and that of $2k$ is $\sum\limits_{m=0}^{t-1}\left(\frac{2p-1}{3}\right) p^{2m}+ \left(\frac{p-2}{3}\right)p^{2m+1} $. Thus we have $s(k)=s(2k)=t(p-1)$.

 We may observe from Theorem \ref{stick} that 
\begin{equation}\label{for}
c(a,b)= \dfrac{s(a)+s(b)-s(a+b)}{p-1}.
\end{equation}

 Given $j \in  \bb{Z}$, by $\ov{j}$ we denote the unique element of $\{0, 1, \ldots, q-2\}$ satisfying $j \equiv \ov{j} \pmod{q-1}$.

The following follows from \eqref{for}.

\begin{lemma}\label{comp}
Given an integer $j\not\equiv 0 \pmod{q-1}$ and $m=-1,1$, the following hold.   
\begin{enumerate}
\item $c(\ov{j},\ov{mk})+c(\ov{j+mk},\ov{-mk})=2t$
\item $c(\ov{j},\ov{mk})+c(\ov{j+mk},\ov{mk})= t+ c(\ov{j},\ov{-mk})$
\item $c(\ov{j},\ov{mk})=c(\ov{-j-mk},\ov{mk})$
\end{enumerate}
\end{lemma}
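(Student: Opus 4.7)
The plan is to reduce all three identities to the digit-sum formula \eqref{for}, namely $c(\ov{a},\ov{b})=\frac{s(\ov{a})+s(\ov{b})-s(\ov{a+b})}{p-1}$, combined with three elementary facts that are already available in the text or follow at once from the definition of $s$: (a) $s(\ov{k})=s(\ov{2k})=t(p-1)$, which is established just before the lemma; (b) $\ov{-k}=2k$ and $\ov{-2k}=k$, since $3k=q-1$, so for $m=\pm 1$ we have $s(\ov{mk})=s(\ov{-mk})=t(p-1)$ and $\ov{j-mk}=\ov{j+2mk}$; and (c) for any $a\not\equiv 0\pmod{q-1}$, $s(\ov{a})+s(\ov{-a})=2t(p-1)$, which holds because $\ov{a}+\ov{-a}=q-1=\sum_{i=0}^{2t-1}(p-1)p^{i}$, so the base-$p$ digits of $\ov{a}$ and $\ov{-a}$ sum to $p-1$ componentwise with no borrowing.

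For part (1), I would write out $c(\ov{j},\ov{mk})+c(\ov{j+mk},\ov{-mk})$ using \eqref{for}; the terms $s(\ov{j})$ and $s(\ov{j+mk})$ cancel telescopically (the sums telescope because $\ov{(j+mk)+(-mk)}=\ov{j}$), leaving $\frac{s(\ov{mk})+s(\ov{-mk})}{p-1}=\frac{2t(p-1)}{p-1}=2t$ by (b).

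For part (2), the same expansion gives
\[
c(\ov{j},\ov{mk})+c(\ov{j+mk},\ov{mk})=\frac{s(\ov{j})+2s(\ov{mk})-s(\ov{j+2mk})}{p-1},
\]
while on the right-hand side, using $\ov{j-mk}=\ov{j+2mk}$ from (b),
\[
t+c(\ov{j},\ov{-mk})=t+\frac{s(\ov{j})+s(\ov{-mk})-s(\ov{j+2mk})}{p-1}.
\]
Since $s(\ov{mk})=s(\ov{-mk})=t(p-1)$, multiplying both expressions by $p-1$ shows they agree.

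For part (3), applying \eqref{for} to both sides reduces the claim to the identity $s(\ov{-j-mk})+s(\ov{j+mk})=s(\ov{-j})+s(\ov{j})$, and both sides equal $2t(p-1)$ by (c) (the hypotheses that $j\not\equiv 0$ and $j+mk\not\equiv 0\pmod{q-1}$ are needed to invoke (c); the latter is ensured because $\ov{j+mk}$ appears as an index in the counting problem, i.e.\ $j$ is taken away from the forbidden residues $0,\pm k\pmod{q-1}$). None of the steps is really an obstacle; the only mild subtlety is keeping the reductions modulo $q-1$ consistent, in particular recognizing that $\ov{-mk}=\ov{2mk}$ for $m=\pm 1$, which is what lets the ``missing'' carry count in (2) be expressed through $c(\ov{j},\ov{-mk})$ rather than $c(\ov{j},\ov{2mk})$.
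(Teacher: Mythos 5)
Your proposal is correct and takes exactly the route the paper intends: the paper offers no written proof beyond the remark that the lemma ``follows from \eqref{for}'', and your argument spells out that derivation, using $s(\ov{k})=s(\ov{2k})=t(p-1)$, $3k\equiv 0\pmod{q-1}$, and the carry-free identity $s(\ov{a})+s(\ov{-a})=2t(p-1)$. Your side remark about needing $j\not\equiv 0,\pm k\pmod{q-1}$ for all the $c$'s to be defined is a fair observation about the implicit hypotheses, not a gap in your argument.
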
  
Let $j \in \{1,\ldots,q-2\}\setminus \{k,2k\}$, define $g(j):=\{c(j,k),c(j,2k)\}$. For every $j$, there is a unique $\phi(j) \in \{1,2 \ldots,k-1\}$ such that $j-\phi(j) \in \{0,k,2k\}$. Note that $\phi^{-1}(i) = \{i,i+k,i+2k\}$.

For $0\leq a\leq t$, we define $Y_{a}:=\{j \ \lvert\ g(j)=\{a,b\}\ \text{for some $b$ such that $a \leq b \leq t$} \}$ and $R_{a}=\{i| 1\leq i \leq k-1\ \text{and}\ \mathfrak{min}(i)=a\}$. From Theorem \ref{m}, we have i) $e_{a}=|R_{a}|$, for $0< a <t$ and; ii) $e_{0}=|R_{0}|+2$. 

\begin{lemma}\label{trs}
Given $Y_{a}$ and $\phi$ defined above and $a<t$, the following are true.
\begin{enumerate}
\item If $\phi_{a}$ is the restriction of $\phi$ to $Y_{a}$, then $\phi_{a}(Y_{a})=R_{a}$. 
\item Let $i \in R_{a}$. If $j \in \phi_{a}^{-1}(i)$ and $m_{j} \in \{1,\ 2\}$ such that $c(j,m_{j}k)=a$, then
\begin{enumerate}
\item $\ov{j+m_{j}k} \notin \phi_{a}^{-1}(i)$
\item $\phi_{a}^{-1}(i)=\{j\}$ if and only if $t \notin g(j)$;
\item and $\phi_{a}^{-1}(i)=\{j,\ov{j-m_{j}k}\}$ if and only if $t\in g(j)$.  
\end{enumerate}
\item For $0\leq a<t$, we have $|R_{a}|=|Y_{a}|-\frac{1}{2}|\{j|\ g(j)=\{a,t\}\}|=|Y_{a}|-|\{j|\ g(j)=\{a\}\}|$

 \end{enumerate}

\end{lemma}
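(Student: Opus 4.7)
My plan is to reduce the entire statement to explicit arithmetic inside a single fiber $\phi^{-1}(i)=\{i,i+k,i+2k\}$. Fix $i\in\{1,\dots,k-1\}$ and set $x:=c(i,k)$, $y:=c(i,2k)$. Applying Lemma~\ref{comp}(1) with $m=\pm 1$ gives $c(\overline{i+k},2k)=2t-x$ and $c(\overline{i+2k},k)=2t-y$, while Lemma~\ref{comp}(2) gives $c(\overline{i+k},k)=t+y-x$ and $c(\overline{i+2k},2k)=t+x-y$. Hence
\[
g(i)=\{x,y\},\quad g(\overline{i+k})=\{t+y-x,\,2t-x\},\quad g(\overline{i+2k})=\{2t-y,\,t+x-y\},
\]
and $\mathfrak{min}(i)$ is the minimum of these six integers. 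The parametrization is symmetric under $(x,y)\leftrightarrow(y,x)$, corresponding to $k\leftrightarrow 2k$, and the three pairs $\{x,2t-x\}$, $\{y,2t-y\}$, $\{t+y-x,\,t+x-y\}$ each sum to $2t$.

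For (1), if $j\in Y_a$ I cyclically relabel so that $j=i$, and by the $k\leftrightarrow 2k$ symmetry I may assume $x=a$; then $y\in[a,t]$ by definition of $Y_a$, and substitution shows all six values above are $\geq a$, giving $\mathfrak{min}(i)=a$. Conversely, if $\mathfrak{min}(i)=a$, then $c(j,m_jk)=a$ for some $j$ and $m_j$; relabelling $j$ as $i$ and (by symmetry) taking $m_j=1$, so $x=a$, the bound $c(\overline{i+2k},2k)=t+a-y\geq a$ forces $y\leq t$, whence $g(j)=\{a,y\}$ with $y\in[a,t]$ and $j\in Y_a$. For (2), with $x=a$, the other two fiber elements satisfy $g(\overline{j+k})=\{t+y-a,\,2t-a\}$ and $g(\overline{j+2k})=\{2t-y,\,t+a-y\}$. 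Since $a<t$, the value $2t-a>t$ shows $\overline{j+k}\notin Y_a$, proving (2a); a short check shows $\overline{j+2k}\in Y_a$ precisely when $y=t$, i.e.\ precisely when $t\in g(j)$. Using $\overline{j+2k}=\overline{j-k}$ (as $3k\equiv 0\pmod{q-1}$) then yields (2b) and (2c).

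The first equality in (3) is a direct count from (2): fibers of $\phi_a$ have size one when $t\notin g(j)$ and size two (with both members having $g=\{a,t\}$) otherwise, so
\[
|R_a|=(|Y_a|-|\{j:g(j)=\{a,t\}\}|)+\tfrac{1}{2}|\{j:g(j)=\{a,t\}\}|.
\]
The second equality reduces to the identity $|\{j:g(j)=\{a\}\}|=\tfrac{1}{2}|\{j:g(j)=\{a,t\}\}|$, which I plan to prove using two involutions arising from Lemma~\ref{comp}(3). Set $\sigma_1(j):=\overline{-j-k}$ and $\sigma_2(j):=\overline{-j+k}$. Lemma~\ref{comp}(3) gives $c(\sigma_1(j),k)=c(j,k)$ and $c(\sigma_2(j),2k)=c(j,2k)$; combining with Lemma~\ref{comp}(1) yields $c(\sigma_1(j),2k)=t+c(j,k)-c(j,2k)$ and $c(\sigma_2(j),k)=t+c(j,2k)-c(j,k)$. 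Specializing to $c(j,k)=c(j,2k)=a$ shows that $\sigma_1,\sigma_2$ send $\{j:g(j)=\{a\}\}$ bijectively onto the two pieces of $\{j':g(j')=\{a,t\}\}$ distinguished by which of $c(j',k),c(j',2k)$ equals $a$; for $a<t$ these pieces are disjoint and together exhaust $\{j:g(j)=\{a,t\}\}$, so the identity follows.

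The main technical hurdle is the second equality in (3): the sets $\{j:g(j)=\{a\}\}$ and $\{j:g(j)=\{a,t\}\}$ sit in different $\phi$-fibers and are not directly compared by the fiber analysis of (2). The crucial observation is that Lemma~\ref{comp}(3) supplies two natural involutions, each preserving one of the carry counts $c(\cdot,k),c(\cdot,2k)$, which together convert a \emph{diagonal} pair $(a,a)$ into the two \emph{off-diagonal} pairs $(a,t)$ and $(t,a)$. Once these are identified, the remainder of the argument reduces to the elementary case analysis carried out on the six-value parametrization of Step 1.
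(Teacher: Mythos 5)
Your proof is correct and follows essentially the same route as the paper: parts (1) and (2) come from the same applications of Lemma \ref{comp} across the fiber $\{i,\ov{i+k},\ov{i+2k}\}$ (your six-value parametrization is just systematic bookkeeping of those identities), and for the second equality in (3) your involutions $\sigma_{1},\sigma_{2}$ are exactly the two branches of the paper's $2$-to-$1$ map $\lambda(j)=\ov{-j-m_{j}k}$. One cosmetic caveat: the identity $c(\sigma_{1}(j),2k)=t+c(j,k)-c(j,2k)$ does not follow from Lemma \ref{comp}(1) alone but needs parts (1), (2) and (3) together (or equation \eqref{for} directly); the identity itself is correct, so this is a citation issue rather than a gap.
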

\begin{proof}
1) Let $m \in \{1,2\}$ and $j \in Y_{a}$ such that $c(j,mk)=a$ and $c(j,\ov{-mk})=b$. Then by Lemma \ref{comp}, we have
$\{c(\ov{j+mk},nk)| \ 0\leq m \leq 2, \ \text{and} \ n=1,2\}=\{a,b,t-a+b,t-b-a, 2t-a,2t-b\}$. Since $a\leq b \leq t$, we have $\mathfrak{min}(\phi(j))=a$ and thus $\phi_{a}(Y_{a}) \subset R_{a}$. If $i \in R_{a}$, then there exists $j \in \{i,i+k,i+2k\}$ and $m \in {1,2}$ such that $c(j,mk)=a$. Using $a= \mrm{min}\left(\{c(i+mk,nk)| \ 0\leq m \leq 2, \ \text{and} \ n=1,2\}\right)$, and Lemma \ref{comp}, we have $c(j,mk) \leq c(\ov{j-mk},\ov{-mk})=t+c(j,mk)-c(j,\ov{-mk})$. Thus we have $c(j,\ov{-mk}) \leq t$ and therefore $j \in  Y_{a}$ and $\phi_{a}(j)=i$. 

2) From Lemma \ref{comp} we have $c(j,m_{j}k)+c(\ov{j+m_{j}k},\ov{-m_{j}k})=2t$ and $c(j,m_{j}k)+c(\ov{j+m_{j}k},m_{j}k)=t+c(j,\ov{-m_{j}k})$. As $c(j,m_{j}k)=a<t$, Lemma \ref{comp} implies $c(\ov{j+m_{j}k},\ov{-m_{j}k})=2t-a>t$.
As $j \in  Y_{a}$, we have $c(j,\ov{-m_{j}k})\geq c(j,m_{j}k)$ and thus $c(\ov{j+m_{j}k},\ov{-m_{j}k})=t+c(j,\ov{-m_{j}k})-c(j,m_{j}k)\geq t$. Thus $\ov{j+m_{j}k} \not\in  \phi_{a}^{-1}(i)$. We have $\phi_{a}^{-1}(i) \subset \{j,\ov{j-mk}\}$ 

As $j \in Y_{a}$, we have that $a=c(j,m_{j}K)\leq c(j,\ov{-m_{j}k}) \leq t$. Now, application of Lemma \ref{comp} yields $c(\ov{j-m_{j}k},m_{j}k)=2t-c(j,mk) \geq t$. By Lemma \ref{comp}, we have $c(\ov{j-m_{j}k},\ov{-m_{j}k})+c(j,\ov{-m_{j}k})= t+c(j,m_{j}k)$, and thus $c(\ov{j-m_{j}k},\ov{-m_{j}k})=a$ if and only if $c(j,\ov{-m_{j}k})=t$. Therefore $\ov{j-m_{j}k} \in Y_{a}$ is and only if $c(j,\ov{-m_{j}k})=t$. Thus (2) is true. 
 
3) From (1) and (2), we have $|R_{a}|= \sum\limits_{i \in Y_{a}}\frac{i}{|\phi^{-1}_{a}(i)|}=|Y_{a}|-\frac{1}{2}|\{j|\ g(j)=\{a,t\}\}|$. Given $j \in \{j|\ g(j)=\{a,t\}\}$, let $m_{j} \in \{1,2\}$ such that $c(j,m_{j}k)=a$. We then have $c(j,\ov{-m_{j}k})=t$. By Lemma \ref{comp}, we have $c(j,m_{j}k)+t=c(j,\ov{-m_{j}k})+c(\ov{j-m_{j}k},\ov{-m_{j}k})$ and thus $c(\ov{j-m_{j}k},\ov{-m_{j}k})=c(j,m_{j}k)=a$. Using  $c(j,mk)=c(\ov{-j-mk},mk)$ and $c(\ov{-j-mk},\ov{-mk})=c(\ov{j-mk},\ov{-mk})$ from Lemma \ref{comp}, we have $c(\ov{-j-m_{j}k},m_{j}k)=c(j,m_{j}k)=c(\ov{j-m_{j}k},\ov{-m_{j}k})=c(\ov{-j-m_{j}k},\ov{-m_{j}k})$. Thus if $j \in \{j|\ g(j)=\{a,t\}\}$, then $\ov{-j-m_{j}k} \in \{j|\ g(j)=\{a\}\}$. Now the map $\lambda: \{j| \ g(j)=\{a,t\}\} \to \{j|\ g(j)=\{a\}\}$ defined by $\lambda(j)=\ov{-j-m_{j}k}$ is well-defined. For $j \in  \{j|\ g(j)=\{a\}\}$, we have $\lambda^{-1}(j)\subset \{\ov{-j}, \ov{-j+k},\ov{-j+2k}\}$. Given $m \in \{1,2\}$, Lemma \ref{comp} gives us $c(\ov{-j},mk)=c(\ov{j-mk},mk)=2t-c(j,-mk)=2t-a$, $c(\ov{-j+mk},mk)=c(\ov{j+mk},mk)=t+c(j,\ov{-mk})-c(j,mk)=t+a-a=t$, and
 $c(\ov{-j+mk},\ov{-mk})=c(j,\ov{-mk})=a$. Thus the map $\lambda$ is a $2$ to $1$ map and therefore $\frac{1}{2}|\{j|\ g(j)=\{a,t\}\}|=|\{j|\ g(j)=\{a\}\}|$.  
\end{proof}
\begin{corollary}\label{rank}
$e_{0}= \left(\dfrac{p+1}{3}\right)^{2t}(2^{t+1}-2)$.
\end{corollary}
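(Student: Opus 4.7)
The plan is to apply Theorem~\ref{m}(1) together with Lemma~\ref{trs}(3) at $a=0$: these give $e_0 = |R_0| + 2 = |Y_0| - |\{j : g(j) = \{0\}\}| + 2$, reducing the corollary to a pure counting problem about the vanishing of $c(j,k)$ and $c(j,2k)$.

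Write $\mu = (p-2)/3$, so that $p = 3\mu+2$ and $(2p-1)/3 = 2\mu+1$. The $p$-adic expansions of $k$ and $2k$ consist of $t$ repetitions of the length-$2$ blocks $(2\mu+1,\mu)$ and $(\mu,2\mu+1)$, respectively. Formula~\eqref{for} then turns $c(j,k)=0$ into the position-wise conditions $j_{2i}\leq \mu$ and $j_{2i+1}\leq 2\mu+1$, and $c(j,2k)=0$ is the analogous condition with even and odd positions interchanged. This immediately yields
\[
|\{j\in\{0,\ldots,q-2\}:c(j,k)=0\}| \;=\; (\mu+1)^t(2\mu+2)^t \;=\; 2^t\bigl(\tfrac{p+1}{3}\bigr)^{2t},
\]
the same count for $c(j,2k)=0$, and $\bigl(\tfrac{p+1}{3}\bigr)^{2t}$ for the intersection (where $j_i\leq \mu$ at every position).

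The crucial step I would prove next is the digit lemma: \emph{if $c(j,k)=0$, then automatically $c(j,2k)\leq t$} (and symmetrically with $k$ and $2k$ interchanged). The argument is carry-control: under the constraint $j_{2i}\leq \mu$ coming from $c(j,k)=0$, every even-position partial sum arising in $j+2k$ satisfies $j_{2i}+\mu+c_{\mathrm{in}}\leq 2\mu+1<p$, so no carry ever propagates out of an even position. All carries of $j+2k$ therefore sit at odd positions and number at most $t$. This lemma identifies $Y_0$ with the subset of $\{1,\ldots,q-2\}\setminus\{k,2k\}$ on which at least one of $c(j,k),c(j,2k)$ vanishes.

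The rest is inclusion-exclusion with exception-handling at $j\in\{0,k,2k\}$: direct checks give $c(k,2k)=c(2k,k)=0$ and $c(k,k)=c(2k,2k)=t$, so $k$ and $2k$ each lie in exactly one of the two vanishing sets while $0$ lies in both. One obtains $|Y_0|=(2^{t+1}-1)\bigl(\tfrac{p+1}{3}\bigr)^{2t}-3$ and $|\{j:g(j)=\{0\}\}|=\bigl(\tfrac{p+1}{3}\bigr)^{2t}-1$; substituting into the reduction gives the stated identity. I expect the main obstacle to be the carry-control lemma in the previous paragraph: checking that an incoming carry from an odd position cannot disrupt the $j_{2i}\leq \mu$ bound requires some care, and it is precisely this propagation fact that makes the ``or'' description of $Y_0$ correct and the subsequent counting clean.
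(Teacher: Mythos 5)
Your proposal is correct and takes essentially the same route as the paper: reduce via Theorem \ref{m} and Lemma \ref{trs}(3) to $e_0=|Y_0|-|\{j\,:\,g(j)=\{0\}\}|+2$, count the no-carry digit conditions position by position, and finish by inclusion--exclusion with the same exceptional values $0,k,2k$ (your even/odd labelling of the digits of $k$ and $2k$ is swapped, but so is the paper's in \S\ref{3}, and by symmetry it changes nothing). The carry-control lemma you single out (if $c(j,k)=0$ then $c(j,2k)\le t$) is left implicit in the paper's count; your direct argument is fine, and it also follows in one line from Lemma \ref{comp}(2), since $c(j,2k)=c(j,k)+c(\ov{j+k},k)-t\le 0+2t-t=t$.
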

\begin{proof}
Lemma \ref{trs} and Theorem \ref{m} imply $e_{0}= |R_{0}|+2=|Y_{0}|-\{j| g(j)=\{0\}\}+2$. We recall that $k=\sum\limits_{m=0}^{t-1}\left(\frac{p-2}{3}\right)p^{2m}+ \left(\frac{2p-1}{3}\right) p^{2m+1}$ and $2k=\sum\limits_{m=0}^{t-1}\left(\frac{2p-1}{3}\right) p^{2m}+ \left(\frac{p-2}{3}\right)p^{2m+1}$. Therefore set $\{j| (c(j,k),c(j,2k))=(0,b)\}$ is made up of numbers of the form $j=\sum\limits_{m=0}^{t-1}a_{2m}p^{2m}+a_{m+1} p^{2m+1}$ satisfying: i) $0\leq a_{2m} < \frac{p+1}{3}$, ii) $0\leq a_{2m+1} < \frac{2(p+1)}{3}$ and, iii) $j \notin \{k,2k\}$. Thus this set has size $2^{t}\left(\dfrac{p+1}{3}\right)^{2t}-2$. Similar arguments yeild $|\{j| j \neq 0\ \text{and} \ (c(j,k),c(j,2k))=(b,0)\}|=2^{t}\left(\dfrac{p+1}{3}\right)^{2t}-2$ and $|\{j| j \neq 0\ \text{and}\ g(j)=\{0\}\}|=\left(\frac{p+1}{3}\right)^{2t}-1$. The result now follows by the principle of inclusion-exclusion.     
\end{proof}

For $0<a<t$, Lemma \ref{trs} shows that $e_{a}=|R_{a}|$.
We will use the transfer matrix method to compute $|R_{a}|$. We construct a weighted digraph $D$, and change the problem of computing $e_{a}$ to that of counting closed walks in $D$ of certain length and weight. 

Let $D$ be a digraph with vertex set $V$, edge set $E$, and with a weight function $wt:E \to \mathfrak{R}$ with values in some commutative ring $\mathfrak{R}$. 
By $M$, we denote the adjacency matrix of $D$ with respect to the weight $wt$.
Given $n\in \bb{Z}_{>0}$, let $C(n)= \sum _{\psi} wt(\psi)$, where the sum is over closed walks in $D$ of length $n$.
The following Lemma which is Corollary $4.7.3$ of \cite{EC1} gives us the generating function $\sum\limits_{n\geq 1}C(n)z^{n}$.
\begin{lemma}\label{trans}
Let  $T(z)= det (I-zM)$, then $\sum\limits_{n\geq 1}C(n)z^{n}= -\dfrac{z T'(z)}{T(z)}$.
\end{lemma}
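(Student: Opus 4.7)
The plan is to identify the generating function $\sum_n C(n) z^n$ with the logarithmic derivative of $\det(I-zM)$ by a direct trace computation.

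First I would verify the standard identity $C(n) = \mathrm{tr}(M^n)$. For any $i$, the $(i,i)$ entry of $M^n$ is $\sum_{i=v_0, v_1, \ldots, v_{n-1}, v_n = i} \prod_{j=0}^{n-1} M_{v_j v_{j+1}}$, where $M_{uv} = \mathrm{wt}(u \to v)$ if $(u,v)\in E$ and $0$ otherwise. This is exactly the sum of weights of closed walks of length $n$ based at $i$. Summing over $i$ recovers all closed walks of length $n$, each counted once. This gives $C(n)=\mathrm{tr}(M^n)$.

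Next I would assemble the generating function. Working in the formal power series ring $\mathfrak{R}[[z]]$, the matrix $I - zM$ is invertible with $(I - zM)^{-1} = \sum_{n \geq 0} z^n M^n$. Multiplying by $M$ and taking the trace yields
\begin{equation*}
\mathrm{tr}\bigl((I-zM)^{-1} M\bigr) = \sum_{n \geq 0} z^n \mathrm{tr}(M^{n+1}) = z^{-1} \sum_{n \geq 1} C(n) z^n.
\end{equation*}
So the goal reduces to showing $-T'(z)/T(z) = \mathrm{tr}\bigl((I-zM)^{-1} M\bigr)$.

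The final step is Jacobi's formula for the derivative of a determinant, applied to $A(z) := I - zM$. Since $A'(z) = -M$, the identity $(\log \det A)' = \mathrm{tr}(A^{-1} A')$ (which holds in $\mathfrak{R}[[z]]$ for any matrix $A$ with $A(0) = I$, and can be verified by expanding $\det(I - zM)$ as a polynomial in the entries of $M$ and differentiating termwise, or equivalently by working over $\mathrm{Frac}(\mathfrak{R})[z]$ and using the adjugate formula) gives
\begin{equation*}
\frac{T'(z)}{T(z)} = \mathrm{tr}\bigl((I-zM)^{-1}(-M)\bigr) = -\mathrm{tr}\bigl((I-zM)^{-1} M\bigr).
\end{equation*}
Combining the two previous displays yields $\sum_{n\geq 1} C(n)z^n = -zT'(z)/T(z)$, as claimed. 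The only mildly subtle point is justifying Jacobi's formula over an arbitrary commutative ring $\mathfrak{R}$; this can be handled by reducing to the universal case of a polynomial ring in the matrix entries (which embeds in its fraction field, where standard eigenvalue arguments via $T(z) = \prod_i (1 - \lambda_i z)$ and $\mathrm{tr}(M^n) = \sum_i \lambda_i^n$ apply), but a purely formal verification by expanding $\det$ as an alternating sum works equally well and avoids any extension of scalars.
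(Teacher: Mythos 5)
Your proof is correct. The paper does not actually prove this lemma --- it is quoted as Corollary 4.7.3 of \cite{EC1} --- and your argument (identifying $C(n)=\mathrm{tr}(M^{n})$ as the weighted count of closed walks, then computing $\sum_{n\geq 1}\mathrm{tr}(M^{n})z^{n}=z\,\mathrm{tr}\bigl((I-zM)^{-1}M\bigr)=-zT'(z)/T(z)$ via Jacobi's formula, equivalently via the factorization $T(z)=\prod_i(1-\lambda_i z)$ over the fraction field) is exactly the standard proof of that cited result. The care you take over an arbitrary commutative ring $\mathfrak{R}$ is sound, though moot in this application, where the weights lie in the integral domain $\mathbb{C}[x,y]$.
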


Consider $A_{1}= \{(\alpha, \gamma, \delta)| \ (\alpha, \gamma, \delta) \in \{0,1,\ldots,p-1\} \times \{1,2\} \times \{1,2\} \}$ and $A_{2}= \{[\alpha, \gamma, \delta] | \ (\alpha, \gamma, \delta) \in \{0,1,\ldots,p-1\} \times \{1,2\} \times \{1,2\} \}$. We construct a bipartite digraph $D=(A_{1}\cup A_{2},E)$. There is an arc $\mrm{e} \in E$ from $(\alpha, \gamma, \delta) \in A_{1}$ to $[\alpha^{'}, \gamma^{'}, \delta^{'}] \in  A_{2}$ if an only if 
\begin{align*}
\alpha+ \dfrac{2p-1}{3} + \gamma &= \beta +p \gamma^{'} \\
& \text{and} \\
\alpha+ \dfrac{p-2}{3} + \delta &= \epsilon +p \delta'
\end{align*}   
for some $\beta, \epsilon \in \{0,1,\ldots,p-1\}$. There is an arc $\mrm{e}_{\lambda} \in E$ from $[\alpha, \gamma, \delta] \in A_{2}$ to $( \alpha', \gamma', \delta ') \in  A_{1}$ if and only if
\begin{align*}
\alpha+ \dfrac{p-2}{3} + \gamma &= \beta +p \gamma^{'} \\
& \text{and} \\
\alpha+ \dfrac{2p-1}{3} + \delta &= \epsilon +p \delta'
\end{align*}   
for some $\beta, \epsilon \in \{0,1,\ldots,p-1\}$. 
The arcs in $D$ of type $\mrm{e}$ and $\mrm{e}_{\lambda}$ are assigned label $\alpha$ and weights $wt(\mrm{e})=wt(\mrm{e}_{\lambda})=x^{\gamma'}y^{\delta'}$. So we have a weight function $wt:E \to  \bb{C}[x,y]$ on $D$. The weight of a walk on $D$ will be the products of the weights of its arcs. 

Given $a,b \in \{0,1,2,\ldots ,2t+1\}$, let $E_{(a,b)}$ be the set of closed walks of length $2t$ and weight $x^{a}y^{b}$. A closed walk of length $2t$ with its initial vertex in $A_{1}$ is said to be of type $A_{1}$, and is of type $A_{2}$ otherwise. Let $Y_{a,b}=\{j \in \{1,2,\ldots,q-2\}\setminus \{k,2k\}|\ g(j)=\{a,b\}\}$. Let $a_{0},a_{1}, \ldots a_{2t}$ be the labels of arcs of a walk $w \in \cup E_{(a,b)}$, then define $\psi(w)= \sum a_{i}p^{i}$.  When $\{a,b\}\cap \{0,\ 2t\}= \emptyset$, we have $\psi(E_{(a,b)}) \subset  Y_{a,b}$. By the $p$-ary add-with-carry-algorithm described in Theorem 4.1 of \cite{X2}, given $j \in Y_{a,b}$, there exist \emph{carry sequences} $(\gamma_{0},\gamma_{1}, \ldots \gamma_{2t-1})$ and $(\delta_{0},\delta_{1}, \ldots \delta_{2t-1})$ with $\gamma_{i}, \delta_{i} \in \{1,2\}$ such that 
\begin{align*}
a_{i}+ \frac{2p-1}{3}+\gamma_{i}= b_{i} + \gamma_{i+1}p \ & & a_{i}+ \frac{p-2}{3}+\delta_{i}= d_{i} + \delta_{i+1}p \ ,\ \text{for even $i$ and;} \\
a_{i}+ \frac{p-2}{3}+\gamma_{i}= b_{i} + \gamma_{i+1}p \ &  & a_{i}+ \frac{2p-1}{3}+\delta_{i}= d_{i} + \delta_{i+1}p \ ,\  \text{for odd $i$}.
\end{align*} 
Here $j= \sum a_{i}p^{i}$, $j+k= \sum b_{i}p^{i}$ and $j+2k=d_{i}p^{i}$.
We can now see that there are exactly two closed walks, one of each type which map to $j$ under $\psi$. If $w(j,A_{1})$ (respectively $w(j,A_{2})$) is the walk of type $A_{1}$ (respectively type $A_{2}$) such that $\psi(w(j,A_{1}))=j$ (respectively $\psi(w(j,A_{2}))=j$), then $wt(w(j,A_{1}))= x^{c(j,k)}y^{c(j,2k)}$ (respectively $wt(w(j,A_{2}))= x^{c(j,2k)}y^{c(j,k)}$). We can now conclude that for $a \neq b$, the restriction of $\psi$ is a bijection from $E_{(a,b)}$ to $Y_{a,b}$. Applying Lemma \ref{trs} 3) gives us
\begin{equation}\label{count}
e_{a}= \sum\limits_{b=a+1}^{t} |E_{(a,b)}|,
\end{equation}
 for all $0<a<t$.
 
 We observe that for all $\alpha, \alpha' \in \{0,1,\ldots,p-1\}$ and $\gamma, \delta \in \{1,2\} \times \{1,2\}$, there is no arc from $(\alpha,\gamma,\delta)$ (resp. $[ \alpha,\gamma,\delta]$) to $[ \alpha',0,1]$ (resp. $(\alpha',1,0)$). 
We may also conclude that
\begin{enumerate}
\item there is an edge from $(\alpha, \gamma, \delta)$ to  $[ \alpha^{'},0,0]$ if and only if $0\leq \alpha< \frac{p+1}{3}-\gamma$;
\item there is an edge from $(\alpha, \gamma, \delta)$ to $[ \alpha^{'},1,0]$ if and only if $\frac{p+1}{3}-\gamma\leq \alpha< \frac{2(p+1)}{3}-\delta $;
\item there is an edge from $(\alpha, \gamma, \delta)$ to $[ \alpha^{'},1,1]$ if and only if $\frac{2(p+1)}{3}-\delta \leq \alpha <p$;
\item there is an edge from $[ \alpha, \gamma, \delta]$ to $(\alpha^{'},0,0)$ if and only if $0 \leq \alpha< \frac{p+1}{3}-\delta$;
\item there is an edge from $[\alpha, \gamma, \delta]$ to $( \alpha^{'},0,1)$ if and only if $\frac{p+1}{3}-\delta \leq \alpha< \frac{2(p+1)}{3}-\gamma $;
\item and there is an edge from $[\alpha, \gamma, \delta]$ to $( \alpha^{'},1,1)$ if and only if $\frac{2(p+1)}{3}-\gamma \leq \alpha <p$.  
 
\end{enumerate}

 Let $M$ be the adjacency matrix of the weighted digraph $D$ and let $U$ be the $\bb{C}(x,y)$ vector space generated by the vertex set $A_{1} \cup A_{2}$ (of $D$) as a basis. By abuse of notation, we may assume $M \in \mrm{End}(U)$.

Let
$ h_{1}:=\sum\limits_{(\gamma, \delta)} \sum\limits_{\alpha'< \frac{p+1}{3}-\gamma} [ \alpha', \gamma, \delta]$, 
$h_{2}:=\sum\limits_{(\gamma, \delta)} \sum\limits_{ \frac{p+1}{3}-\gamma \leq \alpha' < \frac{2(p+1)}{3}-\delta} [ \alpha', \gamma, \delta]$, 
$h_{3}:= \sum\limits_{(\gamma, \delta)} \sum\limits_{\alpha'\geq \frac{2(p+1)}{3}-\delta} [ \alpha', \gamma, \delta]$, 

$h'_{1}:=\sum\limits_{(\gamma, \delta)} \sum\limits_{\alpha'< \frac{p+1}{3}-\delta} ( \alpha',\ \gamma,\ \delta)$, 
$h'_{2}=\sum\limits_{(\gamma, \delta)} \sum\limits_{ \frac{p+1}{3}-\delta \leq \alpha' < \frac{2(p+1)}{3}-\gamma} ( \alpha', \gamma, \delta)$, 
and $h'_{3}:=\sum\limits_{(\gamma, \delta)} \sum\limits_{\alpha'\geq \frac{2(p+1)}{3}-\gamma} ( \alpha', \gamma, \delta)$. 

We can see that $M(A_{1} \cup A_{2})=\{h_{1},h_{2},h_{3},h'_{1},h'_{2},h'_{3}\}$. We also have,

\begin{align*}
M(h'_{1})= \frac{p+1}{3} h_{1} + \frac{p+1}{3} x h_{2} + \frac{p-2}{3}xy h_{3},\\
M(h'_{2})= \frac{p+1}{3} h_{1} + \frac{p-2}{3} x h_{2} + \frac{p+1}{3}xy h_{3}, \\
M(h'_{3})= \frac{p-2}{3} h_{1} + \frac{p+1}{3} x h_{2} + \frac{p+1}{3}xy h_{3}, \\ 
M(h_{1})= \frac{p+1}{3} h'_{1} + \frac{p+1}{3} y h'_{2} + \frac{p-2}{3}xy h'_{3}, \\
M(h_{2})= \frac{p+1}{3} h'_{1} + \frac{p-2}{3} y h'_{2} + \frac{p+1}{3}xy h'_{3},\ \text{and} \\
M(h_{3})= \frac{p-2}{3} h'_{1} + \frac{p+1}{3} y h'_{2} + \frac{p+1}{3}xy h'_{3}.
\end{align*}

Let $W$ be the subspace of $U$ generated by $\{h_{i},\ h'_{i}| \ i=1,2,3\}$, then $M(U)=W$. The set $\beta=\{h_{i},\ h'_{i}| \ i=1,2,3\} $ is linearly independent, and thus is a basis for $W$. Let $M_{[\beta]}$ be the matrix representation of $M|_{W}$ with respect to the basis $\beta$ of $W$. From above we see that $M_{[\beta]}$ is

$$\left[\begin{array}{c c c c c c}
0 & 0 & 0 & \frac{p+1}{3} & \frac{p+1}{3} & \frac{p-2}{3} \\
0 & 0 & 0 & \frac{p+1}{3}y & \frac{p-2}{3}y & \frac{p+1}{3}y \\
0 & 0 & 0 & \frac{p-2}{y}xy & \frac{p+1}{3}xy & \frac{p+1}{3}xy \\
\frac{p+1}{3} & \frac{p+1}{3} & \frac{p-2}{3} & 0 & 0 & 0 \\
\frac{p+1}{3}x & \frac{p-2}{3}x & \frac{p+1}{3}x &0 &0 & 0 \\
\frac{p-2}{y}xy & \frac{p+1}{3}xy & \frac{p+1}{3}xy &0 &0 &0

\end{array} \right].$$

As $det(M_{[\beta]})= -p^{2}x^{3}y^{3} \neq 0$, we have $W \cap \ker(M)=\{0\}$ and thus $U=\ker(M)\oplus W$.

Thus the characteristic polynomial of $M$ is $f(z)=z^{8p-6}\mrm{det}(zI-M_{[\beta]})$.
Careful computation shows that
$ det(zI-M_{[\beta]})=z^{6}- Pz^{4} + Qz^{2}-R,$

 where $P=\left( \left(\frac{p+1}{3}\right)^{2}(x^{2}y^{2}+x^{2}y+xy^{2}+x+y+1)+\left(\frac{p-2}{3}\right)^{2}3xy\right)$,
 
 $Q=\left( \left(\frac{p+1}{3}\right)^{2}(xy)(x^{2}y^{2}+x^{2}y+xy^{2}+x+y+1)+\left(\frac{2p-1}{3}\right)^{2}3x^{2}y^{2}\right)$, and $R=p^{2}x^{3}y^{3}.$ Thus $f(z)= z^{8p}-Pz^{8p-2}+Qz^{8p-4}-Rz^{8p-6}$.

Let $C(n)= \sum _{\psi} wt(\psi)$, where the sum is over closed walks in $D$ of length $n$. As $D$ is a bipartite graph, we have $C(n)=0$ for all odd $n$. By Lemma \ref{trans}, we have

$$\sum_{t \geq 1} C(2t)z^{2t}= -\frac{z T'(z)}{T(z)},$$
where $T(z)= det (I-zM)$. The characteristic polynomial of $M$ was computed above to be $z^{8p}-Pz^{8p-2}+Qz^{8p-4}-Rz^{8p-6}$, and thus we have

$$\sum_{t \geq 1} C(2t)z^{2t}= \frac{2Pz^{2}-4Qz^{4}+6Rz^{6}}{1-(Pz^{2}-Qz^{4}+Rz^{6})}.$$  

Let $C(2t)=0$ for $t \leq 0$. We have $\sum\limits_{t \geq 1}(C(2t)-PC(2t-2)+QC(2t-4)- RC(2t-6))z^{t}= 2Pz- 4Qz^{2}+6Rz^{3}$. Thus we have

\begin{align*}
C(2)=2P \\ C(4)= 2(P^{2}-2Q),\\
 C(6)= 6R+2(P^{3}-2QP)-2PQ, \\
\text{and}\ C(2t)= PC(2t-2)-QC(2t-4) + R C(2t-6) &\ \text{for}\ t> 3.
\end{align*} 

The coefficient of $x^{a}y^{b}$ in $C(2t)$ is $|E_{(a,b)}|$. Given $a<t$, we have from \eqref{count} that $e_{a}= \sum\limits_{a<b\leq t} |E_{(a,b)}|$.
Application of Theorem \ref{m} and Corollary \ref{rank} yields Theorem \ref{e3}.

\section*{Acknowledgement}
I thank Prof. Peter Sin for his valuable suggestions and feedback. I am grateful to Prof. David Saunders for providing computational data. I am indebted to the anonymous referees for several helpful suggestions.  
  
\bibliographystyle{plain}
\bibliography{my}    
\end{document}